\newtheorem{thm}{Theorem}[section]
\newtheorem{cor}[thm]{Corollary}
\newtheorem{lem}[thm]{Lemma}
\numberwithin{equation}{section}
\theoremstyle{definition}
\newcommand{\g}{g}
\newcommand{\FF}{\mathbb F}
\newcommand{\MM}{\mathbb M}
\newcommand{\ZZ}{\mathbb Z}
\newcommand{\disc}{\operatorname {disc}}
\title{On the distribution of discriminants over a finite field}
\author[J. Chan]{Jonathan Chan}
\address[J. Chan]{Department of Mathematics, Princeton University, Princeton, NJ 08544}
\email{jdchan@princeton.edu}
\author[S. Kwon]{Soonho Kwon}
\address[S. Kwon]{Department of Mathematics, Princeton University, Princeton, NJ 08544}
\email{sskwon@princeton.edu}
\author[M. Seaman]{Michael Seaman}
\address[M. Seaman]{Department of Mathematics, Caltech, Pasadena, CA 91125}
\email{mseaman@caltech.edu}
\subjclass[2000]{11T06, 11T55, 12E20, 12Y05} 
\keywords{Discriminant, Finite fields, Arithmetic statistics, Equal distribution}
\begin{document}

\begin{abstract}
For a prime power $q$, we show that the discriminants of monic polynomials in $\FF_q[x]$ of a fixed degree $m$ are equally distributed if 
$\gcd(q-1,m(m-1))=2$ when $q$ is odd and $\gcd(q-1,m(m-1))=1$ if $q$ is even. A theorem in the converse direction is proved when $q-1$ is squarefree.
\end{abstract}

\maketitle

\section{Introduction}\label{intro}

Let $K$ be a field and $f(x) \in K[x]$ be nonconstant of degree $m$ with leading coefficient $a_m$. If $m \ge 2$ and $f(x) = a_m\prod_{i=1}^m (x - \alpha_i)$ in a splitting field over $K$ then the \textit{discriminant} of $f$ is defined to be 
\begin{equation}\label{disc}
\disc(f) = a_m^{2m-2}\displaystyle\prod\limits_{i<j} (\alpha_i-\alpha_j)^2.
\end{equation}
When $f$ is linear, define $\disc(f)$ to be $1$. When $f$ is constant, $\disc(f)$ is not defined. 
We will be interested mostly in discriminants of monic polynomials.

When $K = \FF_q$ is a finite field with $q$ elements then 
it is natural to ask how the discriminants of the polynomials in $\FF_q[x]$ (perhaps restricted to be irreducible or to have some other fixed factorization type, as defined in Section \ref{notation}) 
are distributed among the elements of $\FF_q$. In \cite{cef} and \cite{ellenberg}, questions about squarefree polynomials in $\FF_q[x]$ and their discriminants are studied in the limit as the degree tends to $\infty$. Our focus is different, studying 
not what happens when the degree tends to $\infty$ but instead what happens in specific degrees.

Below are tables listing the number of monic polynomials of a given degree and discriminant in $\FF_q[x]$ for small values of $q$ (all taken to be prime for computational convenience). These tables were created using a program written in Java. Tables \ref{tab1}-\ref{tab4} show the distribution of the discriminants of the monic polynomials in $\FF_q[x]$ with a given degree for $q=3,5,7$, and $11$. Tables \ref{tab5}-\ref{tab8} show the distribution of the discriminants of the monic irreducible polynomials in $\FF_q[x]$ with a given degree for the same values of $q$.  Degrees run along the top row of each table and discriminant values run along the leftmost column. For example, Table \ref{tab2} tells us that 85 monic polynomials of degree 4 in $\FF_5[x]$ have discriminant 3, and Table \ref{tab6} tells us that 0 monic irreducible polynomials of degree 4 in $\FF_5[x]$ have discriminant 1 while 95 monic irreducibles of degree 4 have discriminant 2.

\begin{table}[H]
\centering
		\begin{tabular}{c|c|c|c|c|c|c|c|c|c}
Degree & $\bf{2}$ & $\bf{3}$ & $\bf{4}$& $\bf{5}$& $\bf{6}$& $\bf{7}$& $\bf{8}$ & $\bf{9}$ & $\bf{10}$\\\hline 
        \hline
0 & 3 & 9 & 27 & 81 & 243 & 729 & 2187 & 6561 & 19683\\
1 & 3 & 9 & 27 & 81 & 243 & 729 & 2187 & 6561 & 19683\\
2 & 3 & 9 & 27 & 81 & 243 & 729 & 2187 & 6561 & 19683\\
	    \end{tabular}
			\caption{Distribution of discriminants of monic polynomials in $\FF_3[x]$.}\label{tab1}
\end{table}

\begin{table}[H]
\centering
		\begin{tabular}{c|c|c|c|c|c|c|c|c|c}
Degree & $\bf{2}$ & $\bf{3}$ & 4& 5 & $\bf{6}$& $\bf{7}$& 8 & 9 & $\bf{10}$\\\hline 
        \hline
0 & 5 & 25 & 125 & 625 & 3125 & 15625 & 78125 & 390625 & 1953125 \\
1 & 5 & 25 & 95 & 475 & 3125 & 15625 & 76375 & 381875 & 1953125\\
2 & 5 & 25 & 165 & 825 & 3125 & 15625 & 72125 & 360625 & 1953125 \\
3 & 5 & 25 & 85 & 425 & 3125 & 15625 & 84125 & 420625 & 1953125\\
4 & 5 & 25 & 155 & 775 & 3125 & 15625 & 79875 & 399375 & 1953125 \\
	    \end{tabular}
			\caption{Distribution of discriminants of monic polynomials in $\FF_5[x]$.}\label{tab2}
\end{table}

\begin{table}[H]
\centering
		\begin{tabular}{c|c|c|c|c|c|c|c|c|c}
Degree & $\bf{2}$ & 3 & 4& $\bf{5}$& 6& 7& $\bf{8}$ & 9 & 10 \\\hline 
        \hline
0 & 7 & 49 & 343 & 2401 & 16807 & 117649 & 823543 &  5764801& 40353607\\
1 & 7 & 56 & 392 & 2401 & 12845 & 89915 & 823543 & 5428661& 38000627\\
2 & 7 & 14 & 98 & 2401 & 16835 & 117845 & 823543 & 6386660& 44706620 \\
3 & 7 & 21 & 147 & 2401 & 13195 & 92365 & 823543 &  6050520& 42353640 \\
4 & 7 & 77 & 539 & 2401 & 20741 & 145187 & 823543 &5479082& 38353574 \\
5 & 7 & 84 & 588 & 2401 & 15463 & 108241 & 823543 & 5142942& 36000594 \\
6 & 7 & 42 & 294 & 2401 & 21763 & 152341 & 823543 & 6100941& 42706587  \\
	    \end{tabular}
			\caption{Distribution of discriminants of monic polynomials in $\FF_7[x]$.}\label{tab3}
\end{table}

\begin{table}[H]
\centering
		\begin{tabular}{c|c|c|c|c|c|c|c|c}
Degree & $\bf{2}$ & $\bf{3}$ & $\bf{4}$ & 5& 6& $\bf{7}$ & $\bf{8}$ & $\bf{9}$  \\\hline 
        \hline
0 & 11 & 121 & 1331 & 14641 & 161051 & 1771561 & 19487171 & 214358881 \\
1 & 11 & 121 & 1331 & 13662 & 150282 & 1771561 & 19487171 & 214358881\\
2 & 11 & 121 & 1331 & 14190 & 156090 & 1771561 & 19487171 & 214358881\\
3 & 11 & 121 & 1331 & 15917 & 175087 & 1771561 & 19487171 & 214358881 \\
4 & 11 & 121 & 1331 & 14542 & 159962 & 1771561 & 19487171 & 214358881 \\
5 & 11 & 121 & 1331 & 13992 & 153912 & 1771561 & 19487171 & 214358881 \\
6 & 11 & 121 & 1331 & 15290 & 168190 & 1771561 & 19487171 & 214358881 \\
7 & 11 & 121 & 1331 & 14740 & 162140 & 1771561 & 19487171 & 214358881 \\
8 & 11 & 121 & 1331 & 13365 & 147015 & 1771561 & 19487171 & 214358881\\
9 & 11 & 121 & 1331 & 15092 & 166012 & 1771561 & 19487171 & 214358881 \\
10 & 11 & 121 & 1331 & 15620 & 171820 & 1771561 & 19487171 & 214358881\\
	    \end{tabular}
			\caption{Distribution of discriminants of monic polynomials in $\FF_{11}[x]$.}\label{tab4}
\end{table}

\begin{table}[H]
\centering
		\begin{tabular}{c|c|c|c|c|c|c|c|c|c}
Degree & $\bf{2}$ & $\bf{3}$ & $\bf{4}$& $\bf{5}$& $\bf{6}$& $\bf{7}$& $\bf{8}$ & $\bf{9}$ & $\bf{10}$ \\\hline 
        \hline
1 & 0 & 8 & 0 & 48 & 0 & 312 & 0 & 2184& 0\\
2 & 3 & 0 & 18 & 0 & 116 & 0 & 810 & 0& 5880\\
	    \end{tabular}
			\caption{Distribution of discriminants of monic irreducible polynomials in $\FF_3[x]$.}\label{tab5}
\end{table}

\begin{table}[H]
\centering
		\begin{tabular}{c|c|c|c|c|c|c|c|c|c}
Degree & $\bf{2}$ & $\bf{3}$ & 4& 5& $\bf{6}$& $\bf{7}$& 8 & 9 & $\bf{10}$ \\\hline 
        \hline
1 & 0 & 20 & 0 & 240 & 0 & 5580 & 0 & 105240 & 0\\
2 & 5 & 0 & 95 & 0 & 1290 & 0 & 22575 & 0 & 488124\\
3 & 5 & 0 & 55 & 0 & 1290 & 0 & 26175 & 0 & 488124\\
4 & 0 & 20 & 0 & 384 & 0 & 5580 & 0 & 111760 & 0\\
	    \end{tabular}
			\caption{Distribution of discriminants of monic irreducible polynomials in $\FF_5[x]$.}\label{tab6}
\end{table}

\begin{table}[H]
\centering
		\begin{tabular}{c|c|c|c|c|c|c|c}
Degree & $\bf{2}$ & 3 & 4& $\bf{5}$& 6& 7& $\bf{8}$ \\\hline
        \hline
1 & 0 & 42 & 0 & 1120 & 0 & 29952 & 0 \\
2 & 0 & 14 & 0 & 1120 & 0 & 39312 & 0 \\
3 & 7 & 0 & 84 & 0 & 5131 & 0 & 240100 \\
4 & 0 & 56 & 0 & 1120 & 0 & 48384 & 0 \\ 
5 & 7 & 0 & 336 & 0 & 6034 & 0 & 240100 \\ 
6 & 7 & 0 & 168 & 0 & 8379 & 0 & 240100 \\
	    \end{tabular}
			\caption{Distribution of discriminants of monic irreducible polynomials in $\FF_7[x]$.}\label{tab7}
\end{table}

\begin{table}[H]
\centering
		\begin{tabular}{c|c|c|c|c|c|c|c|c}
Degree & $\bf{2}$ & $\bf{3}$ & $\bf{4}$ & 5 & 6 & $\bf{7}$ & $\bf{8}$ & $\bf{9}$   \\\hline 
        \hline
1 & 0 & 88 & 0 & 6050 & 0 & 556776 & 0 & 52398808 \\
2 & 11 & 0 & 726 & 0 & 57200 & 0 & 5358606 & 0 \\
3 & 0 & 88 & 0 & 6952 & 0 & 556776 & 0  &  52398808  \\
4 & 0 & 88 & 0 & 6402 & 0 & 556776 & 0 & 52398808 \\ 
5 & 0 & 88 & 0 & 6182 & 0 & 556776 & 0 & 52398808  \\ 
6 & 11 & 0 & 726 & 0 & 61600  & 0 & 5358606 & 0 \\
7 & 11 & 0 & 726 & 0 & 59400  & 0 & 5358606 & 0 \\
8 & 11 & 0 & 726 & 0 & 53900  & 0 & 5358606 & 0 \\
9 & 0 & 88 & 0 & 6622 & 0 &  556776 & 0 & 52398808 \\
10 & 11 & 0 & 726 & 0 & 62920  & 0 & 5358606 & 0 \\
	    \end{tabular}
			\caption{Distribution of discriminants of monic irreducible polynomials in $\FF_{11}[x]$.}\label{tab8}
\end{table}

The zero entries in Tables \ref{tab5}-\ref{tab8} 
are easily explained by a theorem of Stickelberger and Swan, as we'll see after Lemma \ref{lemodd}. 
More strikingly, in some degrees all nonzero entries are equal, in which case we say discriminants are equally distributed in those degrees. 
Such degrees are in bold, {\it e.g.}, 
in Table \ref{tab6} they are degrees 2, 3, 6, 7, and 10: in these degrees the elements of $\FF_5^\times$ 
that are discriminants of monic irreducible polynomials in $\FF_5[x]$ are such discriminants an equal 
number of times.  For $\FF_7[x]$ this happens in degrees 2, 5, and 8 according to Table \ref{tab7}.  Upon closer investigation, another pattern emerges: the degrees in our data for which the discriminants of the 
monic irreducibles are equally distributed among the values they assume (half of $\FF_q^\times$) 
are the same degrees for which the discriminants of all monic polynomials are equally 
distributed in all of $\FF_q$. For instance, the degrees in Table \ref{tab2} for which the  
elements of $\FF_5$ are discriminants of the same number of monic polynomials are 2, 3, 6, 7, and 10.  Table \ref{tab3} says that in degrees 2, 5, and 8, every element of $\FF_7$ is a discriminant of the same number of monic polynomials in $\FF_7[x]$. 
If discriminants of monic polynomials in $\FF_q[x]$ of degree $m$ are equally distributed among the elements of $\FF_q$ then each element of $\FF_q$ is the discriminant of $q^{m-1}$ monic polynomials of degree $m$.

We will prove the following two theorems in order to explain and generalize 
the patterns observed above.  The first theorem is for odd $q$ and the second is for even $q$.

\begin{thm}\label{thmodd}
Let $q$ be an odd prime power. If $m \ge 2$ satisfies $\gcd(q-1,m(m-1))=2$, then the 
discriminants of monic polynomials in $\FF_q[x]$ of degree $m$ are equally distributed among the elements of $\FF_q$ and the 
discriminants of monic irreducible polynomials in $\FF_q[x]$ of degree $m$ are equally distributed among half the elements of $\FF_q^\times$.
\end{thm}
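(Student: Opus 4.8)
The plan is to combine three largely independent ingredients: a scaling symmetry of the discriminant, the classical count of non-squarefree polynomials, and the theorem of Stickelberger and Swan. First I would use the substitution $f(x)\mapsto c^{-m}f(cx)$ for $c\in\FF_q^{\times}$. If $f$ is monic of degree $m$ with roots $\alpha_1,\dots,\alpha_m$ in a splitting field, then $c^{-m}f(cx)$ is monic of degree $m$ with roots $\alpha_i/c$, so by \eqref{disc} its discriminant is $c^{-m(m-1)}\disc(f)$; moreover $f\mapsto c^{-m}f(cx)$ is a bijection on the set of monic degree-$m$ polynomials, and it preserves irreducibility. Since $m(m-1)$ is even and $\gcd(q-1,m(m-1))=2$, the set $\{c^{-m(m-1)}:c\in\FF_q^{\times}\}$ is exactly the index-two subgroup $(\FF_q^{\times})^{2}$ of nonzero squares. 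Writing $N(d)$, respectively $N^{\mathrm{irr}}(d)$, for the number of monic, respectively monic irreducible, degree-$m$ polynomials in $\FF_q[x]$ with discriminant $d$, this shows that $N(d)=N(sd)$ and $N^{\mathrm{irr}}(d)=N^{\mathrm{irr}}(sd)$ for every $d\in\FF_q^{\times}$ and every nonzero square $s$; that is, each of $N$ and $N^{\mathrm{irr}}$ is constant on the nonzero squares and constant on the nonzero non-squares. This is the point at which the hypothesis $\gcd(q-1,m(m-1))=2$ is used.

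Next I would bring in Stickelberger--Swan (which is in any case needed to explain the zeros after Lemma~\ref{lemodd}): for $q$ odd, a monic irreducible polynomial of degree $m$ over $\FF_q$ has square discriminant exactly when $m$ is odd --- equivalently, Frobenius permutes the roots by an $m$-cycle, which lies in the alternating group precisely when $m$ is odd. Hence $N^{\mathrm{irr}}$ is supported on the nonzero squares when $m$ is odd, and on the nonzero non-squares when $m$ is even; since there is at least one monic irreducible of degree $m$ and $N^{\mathrm{irr}}$ is constant on its support by the previous paragraph, the discriminants of monic irreducibles of degree $m$ are equally distributed over exactly that half of $\FF_q^{\times}$. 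This settles the second assertion.

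For the first assertion I need $N$ to be constant on all of $\FF_q$, and for this I would add two more facts. (i) $N(0)$ is the number of non-squarefree monic polynomials of degree $m$; from the Euler-product identity $\prod_{P}(1+u^{\deg P})=Z(u)/Z(u^{2})=(1-qu^{2})/(1-qu)$, where $Z(u)=\sum_{f\ \mathrm{monic}}u^{\deg f}=(1-qu)^{-1}$ and $P$ runs over monic irreducibles, the number of squarefree monic polynomials of degree $m\ge 2$ is $q^{m}-q^{m-1}$, so $N(0)=q^{m-1}$. (ii) Let $\chi$ be the quadratic character of $\FF_q^{\times}$, extended by $\chi(0)=0$. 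Then $\sum_{d\in\FF_q}\chi(d)N(d)=\sum_{f}\chi(\disc f)$, and by Stickelberger--Swan (in the form $\chi(\disc f)=(-1)^{m-r(f)}$ for squarefree $f$ with $r(f)$ distinct irreducible factors, and $\disc f=0$ otherwise) this equals $(-1)^{m}\sum_{f\ \mathrm{squarefree}}(-1)^{r(f)}$; since $\prod_{P}(1-u^{\deg P})=Z(u)^{-1}=1-qu$, that sum is $0$ for $m\ge 2$. From (ii), $\sum_{d\in(\FF_q^{\times})^{2}}N(d)=\sum_{d\notin(\FF_q^{\times})^{2},\,d\ne 0}N(d)$, and with (i) each side equals $\tfrac12(q^{m}-q^{m-1})$; combined with the ``constant on each class'' conclusion of the scaling step and $|(\FF_q^{\times})^{2}|=(q-1)/2$, this gives $N(d)=q^{m-1}$ for all $d\ne 0$, hence for all $d\in\FF_q$.

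The generating-function identities and the scaling computation are routine; the step carrying genuine content is the use of Stickelberger--Swan, which does double duty --- locating the support of $N^{\mathrm{irr}}$, and, through the sign $(-1)^{m-r(f)}$, forcing the character sum $\sum_{f}\chi(\disc f)$ to vanish. If one wishes to avoid quoting Stickelberger--Swan wholesale, its special case for irreducibles follows from the parity of an $m$-cycle as above, and the vanishing of $\sum_{f}\chi(\disc f)$ can instead be verified by fixing all but one suitably chosen coefficient (after an affine change of variables) and summing $\chi$ over that last coefficient --- at the cost of a short case analysis according to whether the characteristic divides $m$ or $m-1$.
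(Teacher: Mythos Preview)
Your proof is correct and follows essentially the same approach as the paper: the scaling substitution $f(x)\mapsto c^{-m}f(cx)$ is exactly the paper's $\gamma_{c^{-1}}$, and your ingredients (i) and (ii) are the paper's Corollaries~\ref{lemb} and~\ref{lema}, both derived from the same zeta-function identities (Theorem~\ref{musums}). The only organizational difference is that the paper first establishes equidistribution on each factorization type $S_{\lambda,\FF_q}$ separately and then takes the union, whereas you work directly with $N$ and $N^{\mathrm{irr}}$; for the theorem as stated the two routes coincide.
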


For example, if $q = 5$ and $2 \le m \le 10$ then $\gcd(q-1,m(m-1)) = 2$ only for $m = 2, 3, 6, 7$, and 10.
If $q = 7$ and $2 \le m \le 8$ then $\gcd(q-1,m(m-1)) = 2$ only for $m = 2, 5$, and 8.

\begin{thm}\label{thmeven}
Let $q$ be a power of $2$. If $m \ge 2$ satisfies $\gcd(q-1,m(m-1))=1$, then the 
discriminants of monic polynomials in $\FF_q[x]$ of degree $m$ are equally distributed among the elements of $\FF_q$ and the 
discriminants of monic irreducible polynomials in $\FF_q[x]$ of degree $m$ are equally distributed among the elements of $\FF_q^\times$. 
\end{thm}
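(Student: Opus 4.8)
The plan is to run the affine-substitution argument that handles the odd case, where the hypothesis $\gcd(q-1,m(m-1))=1$ now makes the bookkeeping cleaner since in characteristic $2$ there is no square/non-square obstruction to contend with. Fix $m\ge2$ with $\gcd(q-1,m(m-1))=1$, and for $c\in\FF_q$ let $N(c)$, resp.\ $N_{\mathrm{irr}}(c)$, denote the number of monic, resp.\ monic irreducible, polynomials of degree $m$ in $\FF_q[x]$ with discriminant $c$. It suffices to show that $N$ and $N_{\mathrm{irr}}$ are each constant on $\FF_q^\times$; a counting argument below then upgrades this to $N(c)=q^{m-1}$ for all $c\in\FF_q$.

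For $a\in\FF_q^\times$ and $b\in\FF_q$, let $\sigma_{a,b}$ send a monic $f(x)=\prod_{i=1}^m(x-\alpha_i)$ of degree $m$ to $a^{-m}f(ax+b)$. First I would check the routine facts that $\sigma_{a,b}$ is a bijection of the set of monic degree-$m$ polynomials to itself with inverse $\sigma_{a^{-1},-a^{-1}b}$, that it carries irreducible polynomials to irreducible polynomials (it is the normalization of the $\FF_q$-algebra automorphism $x\mapsto ax+b$ of $\FF_q[x]$), and that, since the roots of $\sigma_{a,b}f$ are the $(\alpha_i-b)/a$, formula~\eqref{disc} gives
\[
\disc(\sigma_{a,b}f)=\prod_{i<j}\Bigl(\tfrac{\alpha_i-b}{a}-\tfrac{\alpha_j-b}{a}\Bigr)^2=a^{-2\binom{m}{2}}\disc(f)=a^{-m(m-1)}\disc(f).
\]

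Here is where evenness is used: because $q$ is a power of $2$, $q-1$ is odd, so the hypothesis $\gcd(q-1,m(m-1))=1$ says $m(m-1)$ is a unit modulo $q-1$, and hence $a\mapsto a^{-m(m-1)}$ is a bijection of the cyclic group $\FF_q^\times$. Thus for any $c,c'\in\FF_q^\times$ I may pick $a$ with $a^{-m(m-1)}=c'c^{-1}$, and $\sigma_{a,0}$ then restricts to a bijection from $\{f:\disc(f)=c\}$ onto $\{f:\disc(f)=c'\}$ matching up the irreducible elements on each side. Therefore $N(c)=N(c')$ and $N_{\mathrm{irr}}(c)=N_{\mathrm{irr}}(c')$ for all $c,c'\in\FF_q^\times$. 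This is exactly where the even case is easier than the odd one: when $\gcd=2$ the scalars $a^{-m(m-1)}$ run only over the squares in $\FF_q^\times$, so one gets two orbits and must work harder to relate them, whereas here a single orbit is already all of $\FF_q^\times$.

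To conclude, I would count. A monic polynomial of degree $m\ge2$ has discriminant $0$ precisely when it fails to be squarefree, and the number of squarefree monic polynomials of degree $m$ over $\FF_q$ is $q^m-q^{m-1}$; distributing these equally among the $q-1$ nonzero values of the discriminant gives $N(c)=(q^m-q^{m-1})/(q-1)=q^{m-1}$ for $c\ne0$, and then $N(0)=q^m-(q^m-q^{m-1})=q^{m-1}$ as well, which is equidistribution over all of $\FF_q$. For the irreducible statement, a monic irreducible of degree $m\ge2$ is separable (as $\FF_q$ is perfect), hence has nonzero discriminant, so the equal values $N_{\mathrm{irr}}(c)$ for $c\in\FF_q^\times$ are all positive (monic irreducibles of degree $m$ exist) and each equals $1/(q-1)$ times the total number of monic irreducibles of degree $m$. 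I do not expect a genuine obstacle: the only real content is the discriminant transformation formula together with the verification that $\sigma_{a,b}$ is a bijection respecting degree, monicity, and irreducibility, after which the arithmetic hypothesis does precisely the combinatorial work of making the scaling action on discriminants transitive on $\FF_q^\times$.
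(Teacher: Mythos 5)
Your proposal is correct and is essentially the paper's own argument: your $\sigma_{a,0}$ is the paper's change of variables $\gamma_{a^{-1}}$, the scaling law $\disc(\sigma_{a,0}f)=a^{-m(m-1)}\disc(f)$ is Lemma \ref{discc}, the transitivity of the induced action on $\FF_q^\times$ coming from $\gcd(q-1,m(m-1))=1$ is Lemmas \ref{lemcard} and \ref{lemeq} with $\g=1$, and your final count (squarefree monics number $q^m-q^{m-1}$, discriminant $0$ accounts for $q^{m-1}$) matches Theorem \ref{musums} and Corollary \ref{lemb}. The only difference is organizational: the paper factors the argument through the sets $S_{\lambda,\FF_q}$ to get a stronger statement per factorization type (used for the odd case and the converse results), while you act directly on all monics and all irreducibles, which suffices for this theorem.
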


Our paper is organized as follows.  After setting some notation and terminology in Section \ref{notation}, 
we discuss some background results in Section \ref{classical} and prove Theorems \ref{thmodd} and \ref{thmeven} in Section 
\ref{part}.  A partial converse to these theorems is in Section \ref{converse}.  Finally, in Section \ref{surj} we discuss 
the surjectivity of the discriminant for monic polynomials of a fixed degree.

\section{Notation and Terminology}\label{notation}

In this paper, $q$ is a prime power and $\FF_q$ denotes the field with $q$ elements.
Let $\MM_q$ be the set of monic polynomials in $\FF_q[x]$, and for $m \ge 1$ and $d \in \FF_q$ 
let $\MM_q(m,d)$ be the set of monic polynomials in $\FF_q[x]$ of degree $m$ with discriminant $d$.

On $\MM_q$ define the M{\"o}bius function $\mu:\MM_q \rightarrow \{-1,0,1\}$ by analogy to its definition on the positive integers:
$$
\mu(f)=
  \begin{cases}
    1 & \text{if } f \text{ is squarefree with an even number of monic irreducible factors,}\\
    -1 & \text{if } f \text{ is squarefree with an odd number of monic irreducible factors,}\\
    0 & \text{if } f \text{ is not squarefree (has a repeated irreducible factor).}\\
  \end{cases}
 $$

Let $\chi_q$ be the quadratic character of $\FF_q^\times$. If $q = p$ is prime 
it is the Legendre symbol $(\frac{\cdot}{p})$.

We will use $\pi$ to denote a monic irreducible polynomial.

For a positive integer $m$, 
let $\lambda= (\lambda_1,\lambda_2,\ldots,\lambda_k)$ be a partition of $m$: the $\lambda_i$'s are positive integers and 
$\lambda_1 + \lambda_2 + \cdots + \lambda_k = m$. 
For any field $K$ and monic 
$f \in K[x]$ of degree $m$, we say $f$ has \textit{factorization type} $\lambda$ if $f= \pi_1\pi_2\cdots\pi_k$ where the $\pi_i$'s are distinct monic irreducibles in $K[x]$ and $\deg\pi_i = \lambda_i$ for all $i$.  For example, monic irreducible polynomials of degree $m$ have factorization type $(m)$. 
We define subsets $S_{\lambda,K} \subset K[x]$ and $D_{\lambda,K} \subset K$ as follows: 
$$
S_{\lambda,K}=\left\{f \in K[x] \mid f \text{ is monic and squarefree with factorization type } \lambda \right\}
$$
and
$$
D_{\lambda,K}=\{\disc(f) \mid f \in S_{\lambda,K}\}.
$$ 
We use ``$S$'' in $S_{\lambda,K}$ 
as a reminder of ``squarefree'' (no repeated irreducible factors).  
Our interest in $S_{\lambda,K}$ and $D_{\lambda,K}$ will be in the case that $K$ is a finite field. 

Since $\FF_q[x]$ has finitely many irreducibles of any degree, there may not be enough of them 
for some factorization types.  For instance, 
if $\lambda = (1,\dots,1)$ has more than $q$ coordinates and all are 1 
then $S_{\lambda,\FF_q}$ is empty. 
On the other hand, for any $m \ge 1$ there is a monic irreducible of degree $m$ in $\FF_q[x]$, so 
$S_{(m),\FF_q}$ is not empty.

\section{Classical Results}\label{classical}

In this section we discuss some well-known results about $\FF_q[x]$ 
that we will need.

\begin{thm}[Stickelberger, Swan]\label{thmswan}
Let $q$ be an odd prime power. 
If $f \in \FF_q[x]$ is a squarefree monic polynomial of degree $m$ with $k$ distinct monic irreducible factors  then $m$ and $k$ have the same parity if and only if $\disc(f)$ is a square in $\FF_q^\times$.  Equivalently, for all monic $f$ in $\FF_q[x]$, $\chi_q(\disc(f))=(-1)^{\deg f}\mu(f)$. 
\end{thm}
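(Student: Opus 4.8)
The plan is to reduce immediately to the case that $f$ is squarefree. If $f$ has a repeated irreducible factor then $\disc(f)=0$ and $\mu(f)=0$, so (reading $\chi_q(0)=0$) both sides of $\chi_q(\disc(f))=(-1)^{\deg f}\mu(f)$ vanish. For squarefree $f$ of degree $m$ with $k$ distinct monic irreducible factors we have $\mu(f)=(-1)^k$, so the displayed identity is equivalent to $\chi_q(\disc(f))=(-1)^{m-k}$, and since $\disc(f)\ne 0$ for squarefree $f$ this in turn is equivalent to the first assertion, namely ``$m\equiv k\pmod 2$ if and only if $\disc(f)$ is a square in $\FF_q^\times$''. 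Thus it suffices to prove that last statement.

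Next I would set up the Galois-theoretic machinery. Let $\alpha_1,\dots,\alpha_m\in\overline{\FF_q}$ be the roots of $f$ (distinct, by squarefreeness) and put $\delta=\prod_{i<j}(\alpha_i-\alpha_j)$, so that $\disc(f)=\delta^2$. Since $\disc(f)$ is a polynomial with integer coefficients in the coefficients of $f$, it lies in $\FF_q$; as $\delta^2\in\FF_q^\times$ and $\delta\in\overline{\FF_q}$, the quantity $\disc(f)$ is a square in $\FF_q^\times$ precisely when its square roots lie in $\FF_q$, i.e.\ when $\delta\in\FF_q$. Because the Frobenius map $\phi\colon x\mapsto x^q$ generates $\mathrm{Gal}(\overline{\FF_q}/\FF_q)$ with fixed field $\FF_q$, the problem becomes: decide when $\phi(\delta)=\delta$.

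The heart of the argument is the cycle structure of $\phi$ on the roots. Since $f$ has coefficients in $\FF_q$, we get $f(\alpha_i^q)=f(\alpha_i)^q=0$, so $\phi$ permutes the set $\{\alpha_1,\dots,\alpha_m\}$. The $\phi$-orbit of a root of an irreducible factor $\pi$ of degree $d$ is exactly the set of its $d$ conjugates over $\FF_q$, which are the $d$ roots of $\pi$; hence $\phi$ acts as a single $d$-cycle on the roots of each degree-$d$ factor, and its overall cycle type on the $m$ roots is the factorization type $(\lambda_1,\dots,\lambda_k)$ of $f$. Therefore the sign of this permutation is $\prod_{i=1}^k(-1)^{\lambda_i-1}=(-1)^{\sum_i(\lambda_i-1)}=(-1)^{m-k}$. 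Since $\phi$ is a field automorphism it commutes with the product defining $\delta$, and applying a permutation $\sigma$ to the roots multiplies $\delta$ by $\mathrm{sgn}(\sigma)$; hence $\phi(\delta)=(-1)^{m-k}\delta$. Consequently $\delta\in\FF_q$ if and only if $(-1)^{m-k}=1$, i.e.\ if and only if $m\equiv k\pmod 2$, which together with the previous paragraph completes the proof.

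I do not expect a deep obstacle here—this is the classical Stickelberger/Swan computation—but the steps requiring care are: (i) justifying that $\disc(f)$ actually lies in $\FF_q$ and that being a square in $\FF_q^\times$ is equivalent to $\delta\in\FF_q$ rather than merely $\delta\in\overline{\FF_q}$; and (ii) the bookkeeping that identifies the cycle type of $\phi$ with the factorization type of $f$, and hence the sign of $\phi$ on the roots with $(-1)^{m-k}$.
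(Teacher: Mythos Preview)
Your argument is correct and is the classical Galois-theoretic proof of the Stickelberger--Swan theorem: reduce to the squarefree case, identify whether $\disc(f)$ is a square with whether $\delta=\prod_{i<j}(\alpha_i-\alpha_j)$ is Frobenius-fixed, and compute $\phi(\delta)=\operatorname{sgn}(\phi|_{\{\alpha_i\}})\,\delta=(-1)^{m-k}\delta$ using the cycle structure of Frobenius on the roots. The care points you flag are the right ones and are handled adequately; note only that you are implicitly using $q$ odd in the final step (so that $(-1)^{m-k}\delta=\delta$ forces $(-1)^{m-k}=1$, and so that the two square roots of $\disc(f)$ are genuinely distinct), which is indeed part of the hypothesis.

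The paper itself does not give a proof: it simply cites Berlekamp \cite[pp.~164--165]{berlekamp} and records the translation $(-1)^{\deg f}\mu(f)=(-1)^{m-k}$ for squarefree $f$. So your proposal is strictly more than what the paper provides, supplying the standard self-contained argument rather than a reference. The approaches are compatible---Berlekamp's treatment is essentially the same Frobenius/sign computation---so there is no tension, only that you have written out what the paper delegates to the literature.
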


\begin{proof}
See \cite[pp.~164--165]{berlekamp}. Note for squarefree monic $f$ that 
$(-1)^{\deg f}\mu(f) = (-1)^{m-k}$.
\end{proof}

In this theorem we can include $f$ that are not squarefree in the formula 
$\chi_q(\disc(f))=(-1)^{\deg f}\mu(f)$ because in that case both sides are 0.

Swan \cite[Theorem 1]{swan} proved an analogue of Theorem \ref{thmswan} when $q$ is a power of 2 using extension fields of the 2-adic numbers, 
but our work on even $q$ will not need that.
For all $q$ we will need the values of $\sum \mu(f)$ and $\sum |\mu(f)|$, where $f$ runs over monics in $\FF_q[x]$ of a fixed degree.  The next classical result computes these sums.

\begin{thm}\label{musums}
For $m \ge 2$, 
$\displaystyle \sum_{\substack{\deg f=m \\ f {\rm \ monic}}} \mu(f) = 0$ and 
$\displaystyle \sum_{\substack{\deg f=m \\ f {\rm \ monic}}} |\mu(f)| = q^m - q^{m-1}$.
\end{thm}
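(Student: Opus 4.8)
The two identities are classical generating-function computations, and I would prove both by using the zeta function of $\FF_q[x]$.

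Recall that the zeta function of $\FF_q[x]$ is
\[
Z(t) = \sum_{f \text{ monic}} t^{\deg f} = \prod_{\pi} \frac{1}{1-t^{\deg\pi}} = \frac{1}{1-qt},
\]
the last equality because there are exactly $q^m$ monic polynomials of degree $m$. First I would handle the squarefree count $\sum_{\deg f = m}|\mu(f)|$, which counts monic squarefree polynomials of degree $m$. The standard identity is that the generating function for monic squarefree polynomials is $\prod_\pi (1 + t^{\deg\pi}) = Z(t)/Z(t^2) = \frac{1-qt^2}{1-qt}$. Expanding $\frac{1-qt^2}{1-qt} = (1-qt^2)\sum_{n\ge 0} q^n t^n$, the coefficient of $t^m$ for $m \ge 2$ is $q^m - q\cdot q^{m-2} = q^m - q^{m-1}$, which is the claim. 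For the Möbius sum, the Dirichlet-series analogue of $\sum \mu(n)/n^s = 1/\zeta(s)$ gives $\sum_f \mu(f)\,t^{\deg f} = \prod_\pi(1 - t^{\deg\pi}) = 1/Z(t) = 1 - qt$; hence the coefficient of $t^m$ is $0$ for all $m \ge 2$ (and in fact the identity also gives the $m=0,1$ coefficients $1$ and $-q$).

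To make this self-contained, I would first justify the two product formulas at the level of formal power series in $\ZZ[[t]]$: unique factorization into monic irreducibles in $\FF_q[x]$ shows that $\sum_{f\text{ monic}} t^{\deg f} = \prod_\pi (1-t^{\deg\pi})^{-1}$, where the product converges in $\ZZ[[t]]$ since there are only finitely many irreducibles of each degree. Multiplying by a character of the monoid of monic polynomials that is completely multiplicative (the constant function $1$) or by $\mu$, which satisfies $\sum_{d \mid f}\mu(d) = [\,f \text{ constant}\,]$, yields $\sum_f \mu(f) t^{\deg f} = \prod_\pi(1-t^{\deg\pi})$ and $\sum_{f \text{ squarefree}} t^{\deg f} = \prod_\pi (1+t^{\deg\pi})$. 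Then I extract the needed coefficients from the closed forms above.

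**Main obstacle.** There is no serious obstacle here; the only thing requiring a little care is bookkeeping with the formal power series — specifically, confirming that the Euler products converge in $\ZZ[[t]]$ (each coefficient is a finite sum) so that the manipulations $\prod(1-t^{\deg\pi})^{-1} = (1-qt)^{-1}$, $\prod(1+t^{\deg\pi}) = (1-qt^2)/(1-qt)$, and $\prod(1-t^{\deg\pi}) = 1-qt$ are legitimate identities of power series rather than merely of analytic functions. Once those are in hand, reading off the coefficient of $t^m$ for $m \ge 2$ is immediate. Alternatively, if one prefers to avoid generating functions entirely, both sums can be obtained by Möbius inversion directly: $q^m = \sum_{2e \mid m \text{ or } e \text{ as squarefull part}} (\text{squarefree count})$, but the zeta-function argument is cleaner and I would present that.
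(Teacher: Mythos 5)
Your proposal is correct, and it follows essentially the same route as the paper, which simply cites Rosen's zeta-function calculations for $\FF_q[x]$; you have just carried out those calculations explicitly (the Euler-product identities $\sum_f \mu(f)t^{\deg f} = 1/Z(t) = 1-qt$ and $\sum_{f\ \mathrm{squarefree}} t^{\deg f} = Z(t)/Z(t^2) = (1-qt^2)/(1-qt)$, followed by coefficient extraction). The only blemish is the garbled ``alternatively'' aside at the end, which you can safely delete since the zeta-function argument is complete on its own.
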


\begin{proof}
See \cite[pp.~14, 18]{rosen}, which uses calculations involving the zeta-function of $\FF_q[x]$. 
\end{proof}

Recall $\MM_q(m,d)$ is the set of monic polynomials of degree $m$ in $\FF_q[x]$ and discriminant $d$.

\begin{cor}\label{lema}
For any odd prime power $q$ and integer 
$m\ge 2$, 
$$
\sum_{\{d : \chi_q(d)=1\}} |\MM_q(m,d)|  = 
\sum_{\{d : \chi_q(d)=-1\}} |\MM_q(m,d)|.
$$
\end{cor}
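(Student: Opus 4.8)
The plan is to reinterpret both sides of the claimed identity as weighted counts of monic polynomials and then apply the Stickelberger--Swan theorem together with the vanishing of $\sum\mu(f)$.

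First I would observe that $\sum_{\{d:\chi_q(d)=1\}}|\MM_q(m,d)|$ is precisely the number of monic $f\in\FF_q[x]$ of degree $m$ with $\chi_q(\disc(f))=1$, and likewise $\sum_{\{d:\chi_q(d)=-1\}}|\MM_q(m,d)|$ counts those with $\chi_q(\disc(f))=-1$. Monic $f$ of degree $m$ with $\disc(f)=0$ (equivalently, the non-squarefree ones) contribute to neither sum, since $\chi_q$ is defined only on $\FF_q^\times$. By Theorem~\ref{thmswan}, $\chi_q(\disc(f))=(-1)^{m}\mu(f)$ for every monic $f$ of degree $m$, so $\chi_q(\disc(f))=1\iff\mu(f)=(-1)^{m}$ and $\chi_q(\disc(f))=-1\iff\mu(f)=-(-1)^{m}$. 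Hence the two sums are $\#\{f:\deg f=m,\ \mu(f)=(-1)^{m}\}$ and $\#\{f:\deg f=m,\ \mu(f)=-(-1)^{m}\}$ respectively.

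Next I would take their difference. Since $\mu$ takes values in $\{-1,0,1\}$, the signed difference between the number of monic $f$ of degree $m$ with $\mu(f)=1$ and the number with $\mu(f)=-1$ is exactly $\sum_{\deg f=m}\mu(f)$, the terms with $\mu(f)=0$ dropping out. Therefore
\[
\#\{f:\deg f=m,\ \mu(f)=(-1)^{m}\}-\#\{f:\deg f=m,\ \mu(f)=-(-1)^{m}\}=(-1)^{m}\sum_{\substack{\deg f=m\\ f\text{ monic}}}\mu(f),
\]
the factor $(-1)^{m}=(-1)^{\deg f}$ merely recording whether $m$ is even or odd. Theorem~\ref{musums} gives $\sum_{\deg f=m}\mu(f)=0$ for $m\ge2$, so the difference vanishes and the two sums coincide.

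I expect no genuine obstacle here: the only things requiring care are tracking the sign $(-1)^{m}=(-1)^{\deg f}$ through the Stickelberger--Swan identity, and confirming that polynomials with vanishing discriminant leave both sides cleanly. One could also note in passing that the common value of the two sums is $\tfrac12(q^{m}-q^{m-1})$, using that the number of squarefree monic polynomials of degree $m$ is $\sum|\mu(f)|=q^{m}-q^{m-1}$ by Theorem~\ref{musums}, but this is not needed for the statement.
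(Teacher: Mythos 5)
Your proof is correct and follows essentially the same route as the paper: both reduce the identity to the vanishing of $\sum_{\deg f = m}\mu(f)$ via the Stickelberger--Swan relation $\chi_q(\disc(f))=(-1)^{\deg f}\mu(f)$, the only cosmetic difference being that the paper phrases the difference of the two counts directly as the character sum $\sum \chi_q(\disc(f))$ over monic $f$ of degree $m$ with nonzero discriminant.
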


\begin{proof}
The equation to prove can be rewritten as 
$\displaystyle 
\sum_{\substack{\deg f = m \\ f \text{ monic, } \disc(f) \not= 0}} \chi_q(\disc(f)) = 0,$
which is the same as 
$\displaystyle 
\sum_{\substack{\deg f = m \\ f \text{ monic, } \disc(f) \not= 0}} (-1)^m\mu(f) = 0$  
by Stickelberger and Swan's theorem. 
Taking the factor $(-1)^m$ out of the sum, the sum vanishes by the first result in 
Theorem \ref{musums}.
\end{proof}

\begin{cor}\label{lemb}
For any prime power $q$ and integer $m\ge 2$, the number of monic polynomials in $\FF_q[x]$ of degree $m$ and discriminant $0$ is $q^{m-1}$.
\end{cor}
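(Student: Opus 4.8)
The plan is to count monic polynomials of degree $m$ with discriminant $0$ by translating the vanishing of the discriminant into the property of having a repeated irreducible factor, and then counting non-squarefree monic polynomials directly. A monic $f \in \FF_q[x]$ of degree $m \ge 2$ has $\disc(f) = 0$ precisely when $f$ is not squarefree, i.e.\ when $\mu(f) = 0$: over a splitting field $f$ has a repeated root exactly when some irreducible factor of $f$ is repeated (here I would note that irreducible polynomials over $\FF_q$ are separable, so an irreducible factor contributes a repeated root to $f$ if and only if it appears with multiplicity $\ge 2$ in $f$). Hence the quantity we want is the number of monic $f$ of degree $m$ with $\mu(f) = 0$, which equals $q^m$ minus the number of squarefree monic $f$ of degree $m$.

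Next I would compute the number of squarefree monic polynomials of degree $m$ in $\FF_q[x]$. This is exactly $\sum_{\deg f = m,\ f\text{ monic}} |\mu(f)|$, which by Theorem \ref{musums} equals $q^m - q^{m-1}$ for $m \ge 2$. Therefore the number of monic $f$ of degree $m$ with $\disc(f) = 0$ is
\[
q^m - (q^m - q^{m-1}) = q^{m-1},
\]
as claimed.

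The only real content is the first step—the identification $\{\disc(f)=0\} = \{\mu(f)=0\}$ for monic $f$—and this is where I would be careful: it uses that $\FF_q$ is perfect, so that every irreducible polynomial over $\FF_q$ has distinct roots, which is what makes "$f$ has a repeated root" equivalent to "$f$ has a repeated irreducible factor." Everything after that is bookkeeping plus Theorem \ref{musums}. I do not expect any serious obstacle; the main thing to get right is simply to state the separability argument cleanly and to remember the hypothesis $m \ge 2$ (needed both for $\disc(f)$ to be defined in the nontrivial way and for the count of squarefrees in Theorem \ref{musums}).
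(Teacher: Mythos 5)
Your proposal is correct and follows essentially the same route as the paper: both reduce the count to the identity ``$\disc(f)=0$ iff $f$ is not squarefree'' (equivalently $\mu(f)=0$) and then apply the second formula of Theorem \ref{musums} to count the complementary set of $q^m-q^{m-1}$ squarefree monics. Your explicit remark that separability of irreducibles over the perfect field $\FF_q$ underlies this equivalence is a fair point of care that the paper leaves implicit.
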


\begin{proof}
The total number of monic polynomials in $\FF_q[x]$ of degree $m$ is $q^m$, so this corollary is equivalent to 
showing that the number of monic polynomials of degree $m$ in $\FF_q[x]$ with nonzero discriminant is $q^m-q^{m-1}$. Note $f$ has nonzero discriminant if and only if $|\mu(f)| = 1$, so 
the number of monic polynomials in $\FF_q[x]$ of degree $m$ with nonzero discriminant 
is $\displaystyle \sum_{\substack{\deg f = m \\ f \text{ monic, } \disc(f) \not= 0}} |\mu(f)|$, which is $q^m - q^{m-1}$ by the second result in Theorem \ref{musums}. 
\end{proof}

\section{Partitions and Equal Distribution}\label{part}

To prove Theorems \ref{thmodd} and \ref{thmeven}, 
we will use a change of variables that we first discuss 
over any field $K$. For $c \in K^{\times}$, define $\gamma_c:K[x]-\{0\} \rightarrow K[x]-\{0\}$ by 
\[\gamma_c(f(x))=c^{\deg f}f(c^{-1}x).
\]

A few straightforward properties of the change of variables $\gamma_c$ on $K[x]$ are 
\begin{itemize}
\item 
$\gamma_c(f)$ has the same degree and leading coefficient as $f$, and $\gamma_c(fg) = \gamma_c(f)\gamma_c(g)$, 
\item 
$f$ is irreducible if and only if $\gamma_c(f)$ is irreducible,
\item 
if $f$ is monic, nonconstant, and squarefree then $f$ and $\gamma_c(f)$ have the same factorization type.
\end{itemize}

Next we compare the discriminants of $f(x)$ and $\gamma_c(f(x))$.

\begin{lem}\label{discc}
If $c \in K^\times$ and $f \in K[x]$ has degree $m \ge 1$ then $\disc(\gamma_c(f)) = c^{m(m-1)}\disc(f)$. 
\end{lem}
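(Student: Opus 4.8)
The plan is to reduce immediately to the case $m \ge 2$, since for $m = 1$ both $\disc(f)$ and $\disc(\gamma_c(f))$ are defined to be $1$ and $c^{m(m-1)} = c^0 = 1$, so the identity is trivial. For $m \ge 2$ I would work in a splitting field $L$ of $f$ over $K$ and write $f(x) = a_m \prod_{i=1}^m (x - \alpha_i)$ with $a_m \in K^\times$ the leading coefficient and $\alpha_1, \ldots, \alpha_m \in L$.

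The key computation is to identify $\gamma_c(f)$ explicitly. Substituting, $f(c^{-1}x) = a_m \prod_{i=1}^m (c^{-1}x - \alpha_i) = a_m c^{-m} \prod_{i=1}^m (x - c\alpha_i)$, so that
\[
\gamma_c(f)(x) = c^m f(c^{-1}x) = a_m \prod_{i=1}^m (x - c\alpha_i).
\]
Thus $\gamma_c(f)$ splits in the same field $L$, has the same leading coefficient $a_m$, and has roots $c\alpha_1, \ldots, c\alpha_m$ obtained from those of $f$ by scaling by $c$. (This also re-proves, in passing, the bullet-point properties of $\gamma_c$ used earlier, but those may simply be cited.)

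Plugging into the definition \eqref{disc} of the discriminant, I would then write
\[
\disc(\gamma_c(f)) = a_m^{2m-2} \prod_{i<j} (c\alpha_i - c\alpha_j)^2 = a_m^{2m-2} \, c^{2\binom{m}{2}} \prod_{i<j} (\alpha_i - \alpha_j)^2 = c^{m(m-1)} \disc(f),
\]
using that the number of pairs $i<j$ is $\binom{m}{2}$ and that $2\binom{m}{2} = m(m-1)$. I do not expect a genuine obstacle here; the only points requiring a little care are the bookkeeping of the power of $c$ (two factors of $c$ per pair $(i,j)$) and handling the degenerate linear case by the convention rather than by the formula, since \eqref{disc} only applies for $m \ge 2$.
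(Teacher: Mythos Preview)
Your proposal is correct and follows essentially the same approach as the paper: handle $m=1$ by the convention, then for $m \ge 2$ factor $f$ over a splitting field, observe that $\gamma_c(f)$ has the same leading coefficient and roots $c\alpha_i$, and compute the discriminant directly via \eqref{disc} using $2\binom{m}{2} = m(m-1)$. The only cosmetic difference is that you make the passage to the splitting field explicit.
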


\begin{proof}
The result is true when $m = 1$ since both sides equal 1.  Now take $m \ge 2$ and 
factor $f(x)$ as $a_m\prod_{i=1}^m (x - \alpha_i)$. 
Then $\gamma_c(f) = c^ma_m\prod_{i=1}^m (x/c - \alpha_i) = a_m\prod_{i=1}^m(x - c\alpha_i)$, so 
\begin{eqnarray*}
\disc(\gamma_c(f)) & = & a_m^{2m-2} \prod_{i < j} (c\alpha_i - c\alpha_j)^2 \\
& = & a_m^{2m-2} c^{2\binom{m}{2}}\prod_{i < j} (\alpha_i - \alpha_j)^2 \\
& = & c^{m(m-1)}\disc(f).
\end{eqnarray*}
\end{proof}

For an integer $m \ge 1$ and partition $\lambda$ of $m$, recall $S_{\lambda,K}$ is the set of monic squarefree polynomials in $K[x]$ with factorization type $\lambda$. 
If $f \in S_{\lambda,K}$ then $\gamma_c(f) \in S_{\lambda,K}$ (and the converse is true too, by applying $\gamma_{1/c}$ to $\gamma_c(f)$). Since $\gamma_c$ acts injectively as a map from $K[x]-\{0\}$ to $K[x]-\{0\}$, it is also injective as a map from $S_{\lambda,K}$ to $S_{\lambda,K}$.

The next lemma uses $\gamma_c$ to get a lower bound on the number of possible discriminants of monic squarefree polynomials in $\FF_q[x]$ 
with a common factorization type.

\begin{lem}\label{lemcard}
Let $q$ be a prime power and $m\ge 2$ be an integer. For any partition $\lambda$ 
of $m$ such that $S_{\lambda,\FF_q}$ is not empty, 
$|D_{\lambda,\FF_q}| \ge \frac{q-1}{\g}$ where $\g = \gcd(q-1,m(m-1))$. 
\end{lem}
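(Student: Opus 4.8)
The plan is to exhibit an explicit group action on $D_{\lambda,\FF_q}$ whose orbits have a controlled size, and then count. Concretely, Lemma~\ref{discc} says that for $c \in \FF_q^\times$ and $f$ of degree $m$ we have $\disc(\gamma_c(f)) = c^{m(m-1)}\disc(f)$, while the bullet points before Lemma~\ref{discc} guarantee that $\gamma_c$ maps $S_{\lambda,\FF_q}$ into itself. Hence the multiplicative group $\FF_q^\times$ acts on the set $S_{\lambda,\FF_q}$ via $c \cdot f = \gamma_c(f)$, and this action is compatible with the map $f \mapsto \disc(f)$ in the sense that $\disc(c\cdot f) = c^{m(m-1)}\disc(f)$. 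Therefore the set $D_{\lambda,\FF_q} \subseteq \FF_q^\times$ of discriminants is stable under multiplication by the subgroup $H = \{c^{m(m-1)} : c \in \FF_q^\times\}$ of $\FF_q^\times$, i.e. $D_{\lambda,\FF_q}$ is a union of cosets of $H$.

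First I would pin down the size of $H$. Since $\FF_q^\times$ is cyclic of order $q-1$, the image of the $m(m-1)$-th power map is the unique subgroup of order $(q-1)/\g$, where $\g = \gcd(q-1,m(m-1))$ as in the statement. So $|H| = (q-1)/\g$. Next, since $S_{\lambda,\FF_q}$ is nonempty, $D_{\lambda,\FF_q}$ is nonempty; pick any $d_0 \in D_{\lambda,\FF_q}$. The coset $d_0 H$ is then contained in $D_{\lambda,\FF_q}$ because $D_{\lambda,\FF_q}$ is $H$-stable. A single coset of $H$ has exactly $|H| = (q-1)/\g$ elements, so $|D_{\lambda,\FF_q}| \ge (q-1)/\g$, which is the claimed bound.

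The only subtlety to check carefully is that $D_{\lambda,\FF_q}$ really is closed under multiplication by every element of $H$, not merely under multiplication by $m(m-1)$-th powers of individual elements: but every element of $H$ is by definition of the form $c^{m(m-1)}$ for some $c \in \FF_q^\times$, so given $d = \disc(f) \in D_{\lambda,\FF_q}$ with $f \in S_{\lambda,\FF_q}$, the element $c^{m(m-1)} d = \disc(\gamma_c(f))$ lies in $D_{\lambda,\FF_q}$ since $\gamma_c(f) \in S_{\lambda,\FF_q}$. One should also note that $D_{\lambda,\FF_q} \subseteq \FF_q^\times$ rather than $\FF_q$: elements of $S_{\lambda,\FF_q}$ are squarefree of degree $m \ge 2$, hence have nonzero discriminant, so all of this genuinely takes place inside the group $\FF_q^\times$. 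I do not expect any real obstacle here; the whole argument is a short orbit-counting observation, and the main point is simply to recognize that Lemma~\ref{discc} upgrades $\gamma_c$ from a bijection of $S_{\lambda,\FF_q}$ to a statement about $D_{\lambda,\FF_q}$ being a union of $H$-cosets.
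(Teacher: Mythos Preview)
Your proof is correct and follows essentially the same approach as the paper: both use Lemma~\ref{discc} together with the fact that $\gamma_c$ preserves $S_{\lambda,\FF_q}$ to show that $D_{\lambda,\FF_q}$ contains all $(q-1)/\g$ multiples of a fixed discriminant by $m(m-1)$-th powers in $\FF_q^\times$. The only cosmetic difference is that the paper fixes a generator $\zeta$ of $\FF_q^\times$ and a single $f \in S_{\lambda,\FF_q}$ and counts the distinct values $\zeta^{m(m-1)r}\disc(f)$ directly, whereas you phrase the same computation in the language of group actions and $H$-cosets.
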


\begin{proof}
Fix $f \in S_{\lambda,\FF_q}$ and a generator $\zeta$ of $\FF_q^{\times}$. 
For $r \in \ZZ$, $\gamma_{\zeta^r}(f) \in S_{\lambda,\FF_q}$ too and 
$\disc(\gamma_{\zeta^r}(f)) = \zeta^{m(m-1)r}\disc(f)$ by Lemma \ref{discc}.
Thus $|D_{\lambda,\FF_q}| \ge |\{\zeta^{m(m-1)r} : r \in \ZZ\}|$, and the number of different 
powers of $\zeta^{m(m-1)}$ is $(q-1)/\g$ since $\zeta$ has order $q-1$.
\end{proof}

Here is a sufficient condition for two different numbers in $\FF_q^\times$ to be 
discriminants of the same number of polynomials in $S_{\lambda,\FF_q}$.

\begin{lem}\label{lemeq}
Let $q$ be a prime power, $m\ge 2$, $\g=\gcd(q-1,m(m-1))$, and 
$\lambda$ be a partition of $m$ such that $S_{\lambda,\FF_q}$ is not empty. If $\delta_1,\delta_2 \in D_{\lambda,\FF_q}$ and $\delta_2/\delta_1$ is a $\g$th power in $\FF_q^\times$ then the number of polynomials in $S_{\lambda,\FF_q}$ with discriminant $\delta_1$ is equal to the number of polynomials in $S_{\lambda,\FF_q}$ with discriminant $\delta_2$.
\end{lem}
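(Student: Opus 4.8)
The plan is to exhibit an explicit bijection between the set of $f \in S_{\lambda,\FF_q}$ with $\disc(f) = \delta_1$ and the set of $g \in S_{\lambda,\FF_q}$ with $\disc(g) = \delta_2$, using the change of variables $\gamma_c$. By hypothesis $\delta_2/\delta_1 = c_0^{\g}$ for some $c_0 \in \FF_q^\times$. Since $\g = \gcd(q-1,m(m-1))$, there is an integer $t$ with $m(m-1)t \equiv \g \pmod{q-1}$ (Bézout in $\ZZ/(q-1)$), and hence, writing $c = c_0^{t}$, we get $c^{m(m-1)} = c_0^{m(m-1)t} = c_0^{\g} = \delta_2/\delta_1$ in $\FF_q^\times$. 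The point of this first step is that a $\g$th power ratio is automatically an $m(m-1)$th power ratio, because $\langle \zeta^{m(m-1)} \rangle = \langle \zeta^{\g} \rangle$ for a generator $\zeta$ of $\FF_q^\times$.

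Next I would apply Lemma \ref{discc}: for this chosen $c$, the map $\gamma_c$ sends any $f$ with $\disc(f) = \delta_1$ to $\gamma_c(f)$ with $\disc(\gamma_c(f)) = c^{m(m-1)}\disc(f) = (\delta_2/\delta_1)\delta_1 = \delta_2$. Moreover, by the properties of $\gamma_c$ recorded just before Lemma \ref{discc} (it preserves degree, leading coefficient, monic-ness, squarefreeness, and factorization type), $\gamma_c$ maps $S_{\lambda,\FF_q}$ into itself; and since $\gamma_c$ restricted to $S_{\lambda,\FF_q}$ is injective (as noted in the paragraph preceding Lemma \ref{lemcard}) with two-sided inverse $\gamma_{1/c}$, it is a bijection from $\{f \in S_{\lambda,\FF_q} : \disc(f)=\delta_1\}$ to $\{g \in S_{\lambda,\FF_q} : \disc(g)=\delta_2\}$. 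Therefore the two sets have the same cardinality, which is exactly the claim.

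I do not expect a genuine obstacle here; the only thing to be careful about is the first step, namely checking that $\g$th powers coincide with $m(m-1)$th powers in $\FF_q^\times$ so that the required $c$ exists. Once that is in hand, everything else is a direct invocation of Lemma \ref{discc} and the already-established functorial properties of $\gamma_c$. (One should also note that the sets in question are nonempty precisely because $\delta_1,\delta_2 \in D_{\lambda,\FF_q}$, though nonemptiness is not strictly needed for the equality of cardinalities.)
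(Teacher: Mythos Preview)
Your proof is correct and follows essentially the same approach as the paper: find $c\in\FF_q^\times$ with $c^{m(m-1)}=\delta_2/\delta_1$ (you use B\'ezout, the paper uses the order of $\zeta^{m(m-1)}$, but these are the same observation that $g$th powers coincide with $m(m-1)$th powers in $\FF_q^\times$), then apply $\gamma_c$ and Lemma~\ref{discc}. The only cosmetic difference is that you immediately note $\gamma_c$ is a bijection with inverse $\gamma_{1/c}$, while the paper proves $|L_1|\le|L_2|$ and $|L_2|\le|L_1|$ separately; your phrasing is slightly more efficient.
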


\begin{proof}
Write $\delta_2/\delta_1 = c^\g$ where $c \in \FF_q^\times$. 
Let $L_1=\{f \in S_{\lambda,\FF_q} \mid \disc(f)=\delta_1\}$ and $L_2=\{f \in S_{\lambda,\FF_q} \mid \disc(f)=\delta_2\}$. We want to show $|L_1| = |L_2|$. 
Let $\zeta$ generate $\FF_q^{\times}$, so $\zeta^{m(m-1)}$ has order $(q-1)/\g$, which implies 
$\{\zeta^{m(m-1)r} : r \in \ZZ\} = \{t \in \FF_q^\times : t^{(q-1)/\g} = 1\}$.
Since $(c^\g)^{(q-1)/\g} = c^{q-1} = 1$, there is an $r \in \ZZ$ such that $\zeta^{m(m-1)r}=c^\g = \delta_2/\delta_1$. 
Then for $f \in L_1$, 
$$
\disc(\gamma_{\zeta^r}(f)) = \zeta^{m(m-1)r}\disc(f) = \frac{\delta_2}{\delta_1} \delta_1=\delta_2, 
$$
so $\{\gamma_{\zeta^r}(f) \mid f \in L_1\} \subset L_2$. Since $\gamma_{\zeta^r}$ acts injectively on 
$S_{\lambda,\FF_q}$,  
$|L_1|=|\{\gamma_{\zeta^r}(f) \mid f \in L_1\}|$. Therefore $|L_1| \le |L_2|$. 
Using an identical argument with an $r$ such that $\zeta^{m(m-1)r}=c^{-\g}$, we get  
$|L_2| \le |L_1|$. Thus $|L_1|=|L_2|$, and the result follows.
\end{proof}

The next lemma gives a constraint on the discriminants of polynomials in $S_{\lambda,\FF_q}$, implying that only half the numbers in $\FF_q^\times$ could possibly be such a discriminant.

\begin{lem}\label{lemodd}
For an odd prime power $q$, integer $m \ge 2$, and partition $\lambda$ of $m$ with $k$ parts, 
the discriminant of a polynomial in $S_{\lambda,\FF_q}$ has to be a $d \in \FF_q^\times$ satisfying 
$\chi_q(d) = (-1)^{m-k}$. 
\end{lem}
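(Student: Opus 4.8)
The plan is to deduce Lemma \ref{lemodd} directly from the theorem of Stickelberger and Swan (Theorem \ref{thmswan}), which relates $\chi_q(\disc f)$ to the parity of $\deg f$ and the M\"obius value $\mu(f)$. Take $f \in S_{\lambda,\FF_q}$, so $f$ is monic, squarefree, of degree $m$, and factors into exactly $k$ distinct monic irreducibles; thus $\mu(f) = (-1)^k$. Since $f$ is squarefree, $\disc(f) \ne 0$, so $\disc(f) \in \FF_q^\times$ and $\chi_q(\disc(f))$ is defined.

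Now I would simply apply the identity $\chi_q(\disc(f)) = (-1)^{\deg f}\mu(f)$ from Theorem \ref{thmswan}: with $\deg f = m$ and $\mu(f) = (-1)^k$, this gives $\chi_q(\disc(f)) = (-1)^m (-1)^k = (-1)^{m+k} = (-1)^{m-k}$, using that $(-1)^{m+k} = (-1)^{m-k}$ since $m+k$ and $m-k$ have the same parity. Hence every $d \in D_{\lambda,\FF_q}$ satisfies $\chi_q(d) = (-1)^{m-k}$, which is the claim.

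For the parenthetical remark that only half the elements of $\FF_q^\times$ can arise, I would note that for odd $q$ exactly $(q-1)/2$ elements of $\FF_q^\times$ have $\chi_q$ equal to $+1$ and the other $(q-1)/2$ have $\chi_q$ equal to $-1$; the sign $(-1)^{m-k}$ is a fixed value determined by $\lambda$, so at most $(q-1)/2$ elements of $\FF_q^\times$ lie in the allowed coset, i.e. $|D_{\lambda,\FF_q}| \le (q-1)/2$. This also recovers the interpretation that $\disc(f)$ is a square in $\FF_q^\times$ precisely when $m \equiv k \pmod 2$, matching the first sentence of Theorem \ref{thmswan}.

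There is no real obstacle here — the lemma is essentially a restatement of Stickelberger--Swan for squarefree $f$ with the number of irreducible factors pinned down by the factorization type. The only point requiring a moment's care is the sign bookkeeping $(-1)^{m+k} = (-1)^{m-k}$, and the observation that squarefreeness of $f$ guarantees $\disc(f) \ne 0$ so that the quadratic character is actually applicable; both are immediate.
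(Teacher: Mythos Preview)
Your proof is correct and follows essentially the same route as the paper: both invoke Theorem~\ref{thmswan} in the form $\chi_q(\disc f) = (-1)^{\deg f}\mu(f)$ and evaluate it on a squarefree $f$ with $k$ irreducible factors to get $(-1)^{m-k}$. Your write-up is a bit more explicit about the sign bookkeeping and the nonvanishing of $\disc(f)$, but the argument is the same.
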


\begin{proof}
This is essentially Stickelberger and Swan's theorem: any $f \in S_{\lambda,\FF_q}$ is a product of $k$ 
monic irreducibles, so if $d = \disc(f)$ then $\chi_q(d) = \chi_q(\disc f) = (-1)^{\deg f}\mu(f) = (-1)^{m-k}$.
\end{proof}

Zero entries in Tables \ref{tab5}-\ref{tab8} illustrate Lemma \ref{lemodd} when $\lambda = (m)$ with 
$k = 1$: $\chi_q(\disc(\pi)) = (-1)^{\deg \pi-1}$ for monic irreducible $\pi$ in $\FF_q[x]$. 
For example, monic irreducibles in $\FF_7[x]$ of degree 4 have discriminant with quadratic character 
$(-1)^{4-1} = -1$, which explains why Table \ref{tab7}  says there are no monic irreducibles of degree 4 in $\FF_7[x]$ with discriminant 1, 2, or 4 (the squares in $\FF_7^\times$).  Lemma \ref{lemodd} does {\it not} explain why in Table \ref{tab7} there are many monic irreducibles in $\FF_7[x]$ of degree 4 with  
each of the discriminants 3, 5, and 6.

Finally we are ready to prove Theorem \ref{thmodd}.

\begin{proof}
Fix an odd prime power $q$ and suppose $m \ge 2$ satisfies $\gcd(q-1,m(m-1))=2$.
The monic squarefree polynomials of degree $m$ in $\FF_q[x]$ are the union of the disjoint 
sets $S_{\lambda,\FF_q}$ as $\lambda$ varies over the partitions of $m$. 
In order to prove Theorem \ref{thmodd}, we will prove a theorem about 
each partition $\lambda$ of $m$: if $\gcd(q-1,m(m-1)) = 2$ and 
$S_{\lambda,\FF_q}$ is not empty, we will show 
the discriminants of the polynomials in $S_{\lambda,\FF_q}$ are equally distributed among the 
$d \in \FF_q^\times$ satisfying $\chi_q(d)=(-1)^{m-k}$, where $k$ is the number of parts in $\lambda$.

By Lemma \ref{lemcard} for $\g = 2$, $|D_{\lambda,\FF_q}| \ge (q-1)/2$. We also know by Lemma \ref{lemodd} that $D_{\lambda,\FF_q} \subset \{d \in \FF_q^\times \mid \chi_q(d)=(-1)^{m-k}\}$, and 
$|\{d \in \FF_q^\times \mid \chi_q(d)=(-1)^{m-k}\}|=(q-1)/2$. 
Thus $D_{\lambda,\FF_q} = \{d \in \FF_q^\times \mid \chi_q(d)=(-1)^{m-k}\}$.
By Lemma \ref{lemeq} with $d = 2$, for $\delta_1$ and $\delta_2$ in $D_{\lambda,\FF_q}$ such that 
$\chi_q(\delta_1/\delta_2)=1$, the number of polynomials in $S_{\lambda,\FF_q}$ with discriminant $\delta_1$ equals the number with discriminant $\delta_2$. The condition $\chi_q(\delta_1/\delta_2)=1$ holds for all $\delta_1$ and $\delta_2$ in $
D_{\lambda,\FF_q}$ since $\chi_q(\delta_1)$ and $\chi_q(\delta_2)$ equal $(-1)^{m-k}$, so 
discriminants of polynomials in 
$S_{\lambda,\FF_q}$ are equally distributed among the elements of $D_{\lambda,\FF_q}$ 
when $S_{\lambda,\FF_q}$ is not empty.

In particular, since $S_{(m),\FF_q}$ is not empty, 
when $\gcd(q-1,m(m-1)) = 2$ 
the discriminants of monic irreducibles of degree $m$ 
in $\FF_q[x]$ are equally distributed among all $d \in \FF_q^\times$ satisfying $\chi_q(d) = (-1)^{m-1}$. 
That proves the part of Theorem \ref{thmodd} about irreducible polynomials of degree $m$ and 
explains the columns in Tables \ref{tab5}-\ref{tab8} in which all nonzero entries are equal.

To prove the part of Theorem \ref{thmodd} about all monic polynomials of degree $m$, we use all partitions of $m$.
By Lemma \ref{lemodd}, 
when $\lambda$ runs over partitions of $m$ and the number of parts of $\lambda$ is denoted $\ell(\lambda)$, 
 \begin{equation}\label{union1}
 \bigcup_{\{\lambda : 2 \mid (m-\ell(\lambda))\}} S_{\lambda,\FF_q}=\{f \in \MM_q \mid \deg f=m, \chi_q(\disc(f))=1\}
 \end{equation}
 and 
 \begin{equation}\label{union2}
 \bigcup_{\{\lambda : 2 \nmid (m-\ell(\lambda))\}} S_{\lambda,\FF_q}=\{f \in \MM_q \mid \deg f=m, \chi_q(\disc(f))=-1\}.
 \end{equation}
 By the first part of this proof, the discriminants of the $f$ in  
 (\ref{union1}) are equally distributed among the elements of $\FF_q^{\times}$ with quadratic character $1$, and the discriminants of the $f$ in 
 (\ref{union2}) are equally distributed among the elements of $\FF_q^{\times}$ with quadratic character $-1$. 
 The size of (\ref{union1}) is 
 $\sum_{\{d : \chi_q(d) = 1\}} |\MM_q(m,d)|$ 
 and the size of (\ref{union2}) is 
 $\sum_{\{d : \chi_q(d) = -1\}} |\MM_q(m,d)|$, 
 and these are equal by Corollary \ref{lema}. Thus 
 discriminants of {\it squarefree}  monic polynomials of degree $m$ in $\FF_q[x]$ are equally distributed among the elements of  $\FF_q^{\times}$. 
 By the proof of Corollary \ref{lemb} there are $q^m-q^{m-1}$ monic polynomials in $\FF_q[x]$ of degree $m$ with nonzero discriminant. 
 We just showed the discriminants of these polynomials are equally distributed in $\FF_q^\times$, so  there are $\frac{q^m-q^{m-1}}{q-1}=q^{m-1}$ monic polynomials of degree $m$ with discriminant $d$ for each $d \in \FF_q^{\times}$. 
By Corollary \ref{lemb}, $q^{m-1}$ monic polynomials in $\FF_q[x]$ of degree $m$ have discriminant $0$. 
Thus for all $d \in \FF_q$, $q^{m-1}$ monic polynomials in $\FF_q[x]$ of  degree $m$ have discriminant $d$. 
\end{proof}

We now prove Theorem \ref{thmeven}. 

\begin{proof}
Let $q$ be a power of 2 and suppose $m \ge 2$ satisfies $\gcd(q-1,m(m-1))=1$. We will prove that for each partition $\lambda$ of $m$ such that $S_{\lambda,\FF_q}$ is not empty, 
the discriminants of the polynomials in $S_{\lambda,\FF_q}$ are equally distributed among the elements of $\FF_q^\times$. 

By Lemma \ref{lemcard} for $\g = 1$, $|D_{\lambda,\FF_q}| \ge q-1$. Since $D_{\lambda,\FF_q} \subset \FF_q^{\times}$ we get $D_{\lambda,\FF_q}=\FF_q^{\times}$. Then 
by Lemma \ref{lemeq} with $\g = 1$, for $\delta_1$ and $\delta_2$ in $\FF_q^\times$ 
the number of polynomials in $S_{\lambda,\FF_q}$ with discriminant $\delta_1$ equals the number with discriminant $\delta_2$. Thus discriminants of polynomials 
in $S_{\lambda,\FF_q}$ are equally distributed among the elements of $\FF_q^\times$.  In particular, 
since $S_{(m),\FF_q}$ is not empty, we have proved that if $\gcd(q-1,m(m-1)) = 1$ then 
discriminants of monic irreducibles of degree $m$ in $\FF_q[x]$ are equally distributed among the elements of $\FF_q^\times$.  That proves the part of Theorem \ref{thmeven} about the distribution of discriminants of monic irreducible polynomials in $\FF_q[x]$. 

Since discriminants of polynomials in $S_{\lambda,\FF_q}$ are equally distributed among the elements of $\FF_q^\times$ if $S_{\lambda,\FF_q}$ is not 
empty, by letting $\lambda$ vary over all partitions of $m$ we get that 
discriminants of squarefree monic polynomials of degree $m$ in $\FF_q[x]$ are equally distributed among the elements of $\FF_q^\times$. 
Deducing from this that, for each $d \in \FF_q$, $q^{m-1}$ monic polynomials of degree $m$ in $\FF_q[x]$ have discriminant $d$  is identical to 
the end of the proof of Theorem \ref{thmodd}.
\end{proof}

\begin{cor}\label{cora}
For each prime power $q$, the hypotheses of Theorem  
$\ref{thmodd}$ and Theorem $\ref{thmeven}$ are each satisfied for infinitely many $m \ge 2$. 
\end{cor}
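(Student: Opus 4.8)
The plan is to produce, for each prime power $q$, an \emph{explicit} infinite family of integers $m \ge 2$ for which $\gcd(q-1,m(m-1))$ equals $2$ when $q$ is odd and equals $1$ when $q$ is even, i.e.\ for which the hypothesis of Theorem~\ref{thmodd} (respectively Theorem~\ref{thmeven}) is met. The family I would use is the arithmetic progression $m \equiv 2 \pmod{q-1}$, that is, $m = 2 + k(q-1)$ for $k = 0,1,2,\dots$, which is an infinite set of integers all $\ge 2$. The whole content of the proof is this choice of progression; everything after it is a one-line gcd computation.

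The verification proceeds in three short steps. First, for such $m$ one has $m - 1 \equiv 1 \pmod{q-1}$, so $q-1$ and $m-1$ are coprime, whence $\gcd(q-1,m(m-1)) = \gcd(q-1,m)$ by the elementary identity $\gcd(n,ab)=\gcd(n,a)$ valid whenever $\gcd(n,b)=1$. Second, since $m \equiv 2 \pmod{q-1}$ we get $\gcd(q-1,m) = \gcd(q-1,2)$, so $\gcd(q-1,m(m-1)) = \gcd(q-1,2)$ for every $m$ in the progression. Third, I would split on the parity of $q$: if $q$ is odd then $q-1$ is even, so $\gcd(q-1,2)=2$ and Theorem~\ref{thmodd} applies to all these $m$; if $q$ is a power of $2$ then $q-1$ is odd, so $\gcd(q-1,2)=1$ and Theorem~\ref{thmeven} applies to all these $m$.

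I do not expect any real obstacle. The only points meriting a sentence of care are the elementary fact $\gcd(n,ab)=\gcd(n,a)$ when $\gcd(n,b)=1$ (immediate by comparing prime factorizations, or directly) and the degenerate case $q=2$, where $q-1=1$ and the hypothesis of Theorem~\ref{thmeven} is satisfied vacuously by every $m\ge 2$, so the progression works there as well. One could optionally remark that the resulting values of $m$ recover a subfamily of the degrees observed to exhibit equal distribution in the tables (e.g.\ $m \equiv 2 \pmod 4$ for $q=5$), but this is not needed for the statement.
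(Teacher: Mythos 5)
Your proof is correct and follows essentially the same strategy as the paper: pick an explicit arithmetic progression modulo $q-1$ along which $m(m-1) \equiv 2 \pmod{q-1}$, so that $\gcd(q-1,m(m-1)) = \gcd(q-1,2)$, which is $2$ for odd $q$ and $1$ for even $q$. The only difference is cosmetic: you take $m \equiv 2 \pmod{q-1}$ while the paper takes $m \equiv -1 \pmod{q-1}$ (i.e.\ $m = a(q-1)-1$), and both give $m(m-1) \equiv 2 \pmod{q-1}$ immediately.
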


\begin{proof} 
For each integer $a \ge 3$, set $m_a = a(q-1) - 1$, so $m_a \ge a-1\ge 2$.  Then 
$$
m_a(m_a-1) \equiv (-1)(-2) \equiv 2 \bmod q-1, 
$$
so $\gcd(q-1,m_a(m_a-1)) = \gcd(q-1,2)$, which is 2 if $q$ is odd and is 1 if $q$ is even. 
\end{proof}

\section{The Converse Direction}\label{converse}

Our numerical data suggest that the converse of Theorem \ref{thmodd} might be true: equal distribution of discriminant values in degree $m$ 
should imply $\gcd(q-1,m(m-1)) = 2$ for odd $q$.  While we have not been able to prove this, we will prove a partial converse to 
the stronger version of Theorem \ref{thmodd} that we actually proved: if $\gcd(q-1,m(m-1))=2$, then for every partition $\lambda$ of $m$, the discriminants of the polynomials in $S_{\lambda,\FF_q}$ are equally distributed among the elements of $\{c \in \FF_q \mid \chi_q(c)=(-1)^{m-\ell(\lambda)}\}$. 
The converse of this stronger fact will be proved when $q-1$ is squarefree, 
and the same method will carry over to even $q$ when $q-1$ is squarefree.

\begin{thm}\label{convodd}
Suppose $q$ is an odd prime power and $q-1$ is squarefree. If $m \ge 2$ and for every partition $\lambda$ of $m$ the discriminants of the polynomials in $S_{\lambda,\FF_q}$ are equally distributed among the elements of $\{c \in \FF_q^{\times} \mid \chi_q(c)=(-1)^{m-\ell(\lambda})\}$, then $\gcd(q-1,m(m-1))=2$.
\end{thm}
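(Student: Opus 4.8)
The plan is to argue by contraposition: if $\g:=\gcd(q-1,m(m-1))\ne 2$, then some partition $\lambda$ of $m$ violates the equal-distribution hypothesis. Since $q$ is odd, $q-1$ is even, and $m(m-1)$ is always even, so $2\mid\g$; since $q-1$ is squarefree, so is $\g$, hence $\g\ne2$ forces $\g$ to have an odd prime factor $p$. Fix such a $p$. Then $p\mid q-1$ (so, $p$ being odd, $p\mid\tfrac{q-1}{2}$) and $p\mid m(m-1)$, hence $p\mid m$ or $p\mid m-1$. Let $N_d$ denote the number of monic irreducible polynomials of degree $d$ in $\FF_q[x]$. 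I will construct a partition $\lambda$ of $m$ with $S_{\lambda,\FF_q}\ne\varnothing$ and $p\nmid|S_{\lambda,\FF_q}|$; this contradicts the hypothesis, because if the discriminants of the nonempty set $S_{\lambda,\FF_q}$ are equally distributed among the $\tfrac{q-1}{2}$ elements of $\{c\in\FF_q^\times\mid\chi_q(c)=(-1)^{m-\ell(\lambda)}\}$, then each such $c$ is a discriminant of exactly $|S_{\lambda,\FF_q}|/\tfrac{q-1}{2}$ of them, so $\tfrac{q-1}{2}$—and in particular $p$—divides $|S_{\lambda,\FF_q}|$.

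The arithmetic input is a congruence for $N_d$. For $d\ge2$, starting from $d\,N_d=\sum_{e\mid d}\mu(e)q^{d/e}$ and expanding $q^{d/e}=(1+(q-1))^{d/e}\equiv1+\tfrac{d}{e}(q-1)\pmod{(q-1)^2}$, the identities $\sum_{e\mid d}\mu(e)=0$ and $\sum_{e\mid d}\mu(e)\tfrac{d}{e}=\varphi(d)$ give
\[d\,N_d\equiv(q-1)\,\varphi(d)\pmod{(q-1)^2}.\]
Since $q-1$ is squarefree and $p\mid q-1$, $v_p(q-1)=1$, so $v_p\bigl((q-1)\varphi(d)\bigr)=1+v_p(\varphi(d))$ while the error term has $v_p\ge2$. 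I apply this only to the degrees $d=p\cdot2^{\,i}$ ($i\ge0$): there $\varphi(p\cdot2^{\,i})$ divides $(p-1)2^{\,i}$, which is coprime to $p$, so $v_p(d\,N_d)=1=v_p(d)$ and therefore $v_p(N_d)=0$, i.e.\ $p\nmid N_d$. Also $N_1=q\equiv1\pmod p$.

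Now build $\lambda$. If $p\mid m$, write $m/p=\sum_{i\in S}2^{\,i}$ in binary and let $\lambda$ be the partition of $m$ with (distinct) parts $\{p\cdot2^{\,i}:i\in S\}$. If $p\mid m-1$, write $(m-1)/p=\sum_{i\in S}2^{\,i}$ and let $\lambda$ have parts $\{1\}\cup\{p\cdot2^{\,i}:i\in S\}$ (all distinct, as $p\cdot2^{\,i}\ge p\ge3$). In either case $\lambda$ has distinct parts, so one can choose a monic irreducible of each prescribed degree and these are automatically distinct; thus $S_{\lambda,\FF_q}\ne\varnothing$ and $|S_{\lambda,\FF_q}|=\prod_{d}N_d$, the product being over the parts $d$ of $\lambda$. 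Every factor is coprime to $p$ by the previous paragraph, so $p\nmid|S_{\lambda,\FF_q}|$, completing the contradiction; hence $\g=2$.

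The same method handles even $q$ with $q-1$ squarefree, yielding $\gcd(q-1,m(m-1))=1$: the target set there is all of $\FF_q^\times$, so equal distribution forces $(q-1)\mid|S_{\lambda,\FF_q}|$; the displayed congruence for $N_d$ is characteristic-free, so if some (necessarily odd) prime $p$ divides $\gcd(q-1,m(m-1))$ the identical construction gives $p\nmid|S_{\lambda,\FF_q}|$, a contradiction. The one point that genuinely uses the hypothesis on $q-1$, and is the crux of the argument, is the valuation computation pinning down the degrees $d$ with $p\nmid N_d$; the binary-expansion trick is what makes this usable, since it expresses $m$ (or $m-1$) as a sum of \emph{distinct} admissible degrees $p\cdot2^{\,i}$, keeping all multiplicities equal to $1$ and so avoiding the binomial-coefficient divisibility issues (Lucas/Kummer) that a naive choice such as $\lambda=(p,\dots,p)$ would introduce. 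The structural half—deducing $\tfrac{q-1}{2}\mid|S_{\lambda,\FF_q}|$ from equal distribution—is immediate.
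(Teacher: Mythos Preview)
Your proof is correct and follows the paper's strategy closely: both argue by contraposition, pick an odd prime $\ell$ dividing $\gcd(q-1,m(m-1))$, use the binary expansion of $m/\ell$ or $(m-1)/\ell$ to build a partition $\lambda$ with distinct parts of the form $\ell\cdot 2^i$ (plus an extra $1$ in the second case), and show $\ell\nmid|S_{\lambda,\FF_q}|$ by checking $\ell\nmid N_q(\ell\cdot 2^i)$ for each part. The only real difference is how you establish this last arithmetic fact. The paper proves a dedicated lemma $v_\ell(N_q(2^t\ell))=v_\ell(q-1)-1$ by applying the lifting-the-exponent identity $v_\ell(a^n-1)=v_\ell(a-1)+v_\ell(n)$ (for $a\equiv1\bmod\ell$) to the explicit M\"obius formula, treating the cases $t=0$ and $t\ge1$ separately. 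You instead derive the uniform congruence $d\,N_d\equiv(q-1)\varphi(d)\pmod{(q-1)^2}$ for all $d\ge2$ via the binomial expansion $q^{d/e}=(1+(q-1))^{d/e}$ together with $\sum_{e\mid d}\mu(e)=0$ and $\sum_{e\mid d}\mu(e)\,d/e=\varphi(d)$. Your route is slicker---it handles all degrees $\ell\cdot 2^i$ at once and in fact recovers the paper's lemma without any case split---while the paper's computation makes the lifting-the-exponent mechanism explicit. Both use the squarefreeness of $q-1$ at exactly the same point, to force $v_\ell(q-1)=1$.
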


To prove this, we introduce some notation.  For a prime $\ell$ and integer $n$, let $v_\ell(n)$ be the exponent of the highest power of $\ell$ dividing $n$.
For $m \ge 1$ let $N_q(m)$ be the number of monic irreducibles in $\FF_q[x]$ of degree $m$. 
Gauss showed 
\begin{equation}\label{muformula}
N_q(m)=\frac{1}{m}\left(\displaystyle\sum\limits_{d \mid m} q^d \mu(m/d)\right).
\end{equation}
for all $m$ \cite[pp.~287-289]{jacobson}.

\begin{lem}\label{vlemma}
If $q$ is a prime power and $\ell$ is an odd prime dividing $q-1$, then $v_\ell(N_q(2^t\ell)) = v_\ell(q-1)-1$ for all nonnegative integers $t$.
\end{lem}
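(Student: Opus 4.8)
The plan is to extract the $\ell$-adic valuation of $N_q(2^t\ell)$ directly from Gauss's formula \eqref{muformula}. Write $n = 2^t\ell$. Since $\ell$ is prime, the divisors $d$ of $n$ for which $\mu(n/d) \ne 0$ are exactly those with $n/d$ squarefree, namely $n/d \in \{1, 2, \ell, 2\ell\}$ when $t \ge 1$, and $n/d \in \{1,\ell\}$ when $t = 0$; in all cases $\mu(n/d)$ is $1$ for $d \in \{n, n/2\}$ and $-1$ for $d \in \{n/\ell, n/(2\ell)\}$. So $mN_q(m)$ is, up to sign on each term, $q^{2^t\ell} - q^{2^{t-1}\ell} - q^{2^t} + q^{2^{t-1}}$ when $t \ge 1$ (and $q^\ell - q$ when $t = 0$, which fits the same shape with the middle two terms collapsing). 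First I would note $v_\ell(m) = v_\ell(2^t\ell) = 1$, so $v_\ell(N_q(m)) = v_\ell(mN_q(m)) - 1$, reducing the problem to showing $v_\ell\big(q^{2^t\ell} - q^{2^{t-1}\ell} - q^{2^t} + q^{2^{t-1}}\big) = v_\ell(q-1)$.

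Next I would factor the sum of four powers of $q$. Grouping as $(q^{2^t\ell} - q^{2^t}) - (q^{2^{t-1}\ell} - q^{2^{t-1}}) = q^{2^{t-1}}\big(q^{2^{t-1}}(q^{2^{t-1}(\ell-1)} - 1)\cdot\text{(something)}\big)$ is a little awkward; cleaner is to write the expression as $q^{2^{t-1}}(q^{2^{t-1}}-1)(q^{2^{t-1}(\ell-1)} \cdots)$... — rather, the genuinely clean approach is: set $u = q^{2^{t-1}}$, so the expression is $u^{2\ell} - u^\ell - u^2 + u = u(u^{\ell-1} - 1)(u^\ell - u)/\,$... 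I would instead just write it as $u(u^\ell - 1) - u(u - 1)\cdot$ ... Let me be precise in the writeup: the expression equals $(u^{2\ell} - u^2) - (u^\ell - u) = u^2(u^{2(\ell-1)} - 1) - u(u^{\ell-1}-1) = u(u^{\ell-1}-1)\big(u(u^{\ell-1}+1) - 1\big)$. Since $\ell \nmid q$, we have $\ell \nmid u$, so $v_\ell$ of the expression is $v_\ell(u^{\ell-1} - 1) + v_\ell\big(u(u^{\ell-1}+1)-1\big)$.

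For the first factor: $u = q^{2^{t-1}}$ and $\ell \mid q - 1 \mid u - 1$, so $u \equiv 1 \bmod \ell$, hence $u^{\ell-1} \equiv 1 \bmod \ell$; by the lifting-the-exponent lemma (valid since $\ell$ is an odd prime, $\ell \mid u - 1$, $\ell \nmid u$), $v_\ell(u^{\ell-1} - 1) = v_\ell(u - 1) + v_\ell(\ell - 1) = v_\ell(u-1)$, and a second application of LTE (or just $v_\ell(q^{2^{t-1}} - 1) = v_\ell(q-1) + v_\ell(2^{t-1}) = v_\ell(q-1)$, using $\ell$ odd) gives $v_\ell(u^{\ell-1}-1) = v_\ell(q-1)$. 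For the second factor, modulo $\ell$ we have $u \equiv 1$ and $u^{\ell-1} + 1 \equiv 2$, so $u(u^{\ell-1}+1) - 1 \equiv 2 - 1 = 1 \not\equiv 0 \bmod \ell$ (here $\ell$ odd is used again), hence $v_\ell$ of the second factor is $0$. Adding, $v_\ell$ of the four-term expression is $v_\ell(q-1)$, and subtracting the $v_\ell(m) = 1$ from Gauss's formula gives $v_\ell(N_q(2^t\ell)) = v_\ell(q-1) - 1$, as claimed.

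The main obstacle is purely bookkeeping: handling the degenerate case $t = 0$ (where $m = \ell$ and only two Möbius terms survive, giving $mN_q(m) = q^\ell - q = q(q^{\ell-1}-1)$ directly, so the LTE step applies verbatim), and making sure the hypothesis "$\ell$ odd" is invoked at each spot it is needed (for LTE on $u^{\ell-1}-1$, for $v_\ell(2^{t-1}) = 0$, and for $2 \not\equiv 1 \bmod \ell$ in the second factor). No step is deep; I would present the factorization $u(u^{\ell-1}-1)(u(u^{\ell-1}+1)-1)$ and then two short valuation computations.
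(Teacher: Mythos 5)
Your proof is correct, and it is in essence the paper's argument: both extract $v_\ell(N_q(2^t\ell))$ from Gauss's formula \eqref{muformula} and rest on the same lifting-the-exponent identity \eqref{an1}, treating $t=0$ separately. The only real difference is the middle algebraic step: you fully factor the four-term sum as $u(u^{\ell-1}-1)\bigl(u^{\ell}+u-1\bigr)$ with $u=q^{2^{t-1}}$ and add the valuations of the three factors (the last being an $\ell$-adic unit since it is $\equiv 1 \bmod \ell$), whereas the paper groups it as $q^{2^{t-1}\ell}(q^{2^{t-1}\ell}-1)-q^{2^{t-1}}(q^{2^{t-1}}-1)$, shows the two pieces have $\ell$-adic valuations $v_\ell(q-1)+1$ and $v_\ell(q-1)$, and concludes the difference has the smaller valuation. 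Your factorization buys you a purely multiplicative computation with no appeal to the ``unequal valuations'' ultrametric step, at the cost of having to verify the identity; the paper's grouping needs no identity but does need that the two valuations differ. One transcription slip to fix: in the sentence listing the M\"obius values you assign $\mu(n/d)=1$ to $d\in\{n,n/2\}$ and $-1$ to $d\in\{n/\ell,n/(2\ell)\}$; in fact $\mu(2)=-1$ and $\mu(2\ell)=+1$, so the $+1$ divisors are $n$ and $n/(2\ell)$ and the $-1$ divisors are $n/2$ and $n/\ell$. The four-term expression you actually display and use afterwards is the correct one, so nothing downstream is affected, but note that with the signs as stated the factor $u^{\ell}+u-1$ would be replaced by one congruent to $3 \bmod \ell$, which would break the argument when $\ell=3$ --- so the sentence should be corrected rather than left as harmless.
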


\begin{proof}
When $q$ is a prime power and $\ell$ is an odd prime dividing $q-1$, \eqref{muformula} implies 
$$
N_q(\ell)=\frac{q^\ell-q}{\ell}
$$
and for $t \ge 1$ 
$$
N_q(2^t\ell)=\frac{q^{2^t\ell}-q^{2^{t-1}\ell}-q^{2^t}+q^{2^{t-1}}}{2^t\ell}.
$$

Suppose $t=0$. Then
$$
v_\ell(N_q(2^t\ell))=v_\ell(N_q(\ell))=v_\ell(q^\ell-q)-1=v_\ell(q^{\ell-1}-1)-1.
$$
It is standard for odd primes $\ell$ that
\begin{equation}\label{an1}
a \equiv 1 \bmod \ell \Longrightarrow v_\ell(a^n-1)=v_\ell(a-1)+v_\ell(n).
\end{equation}
Therefore $v_\ell(q^{\ell-1}-1)=v_\ell(q-1)$ if $\ell \mid (q-1)$, so $v_\ell(N_q(\ell))=v_\ell(q-1)-1$.

Next suppose $t \ge 1$. Then
\begin{equation}\label{vell}
v_\ell(N_q(2^t\ell))=v_\ell(q^{2^t\ell}-q^{2^{t-1}\ell}-q^{2^t}+q^{2^{t-1}})-1.
\end{equation}
We have
$$
q^{2^t\ell}-q^{2^{t-1}\ell}-q^{2^t}+q^{2^{t-1}} =q^{2^{t-1}\ell}(q^{2^{t-1}\ell}-1)-q^{2^{t-1}}(q^{2^{t-1}}-1).
$$
By (\ref{an1}), if $\ell \mid (q-1)$ then 
$$
v_\ell(q^{2^{t-1}\ell}(q^{2^{t-1}\ell}-1))=v_\ell(q^{2^{t-1}\ell}-1)=v_\ell(q-1)+1
$$
and
$$
v_\ell(q^{2^{t-1}}(q^{2^{t-1}}-1))=v_\ell(q^{2^{t-1}}-1)=v_\ell(q-1).
$$
Therefore
$$
v_\ell(q^{2^{t-1}\ell}(q^{2^{t-1}\ell}-1)-q^{2^{t-1}}(q^{2^{t-1}}-1))=v_\ell(q-1), 
$$
so $v_\ell(N_q(2^t\ell))=v_\ell(q-1)-1$ from (\ref{vell}).
\end{proof}

We now prove Theorem \ref{convodd}.

\begin{proof}
We will prove the contrapositive. That is, if $q$ is an odd prime, $q-1$ is squarefree, and $\gcd(q-1,m(m-1)) \neq 2$, then we will prove there is a 
partition $\lambda$ of $m$ such that the discriminants of the polynomials in $S_{\lambda,\FF_q}$ are not equally distributed among the elements of $\{c \in \FF_q^{\times} \mid \chi_q(c)=(-1)^{m-\ell(\lambda})\}$ because $\frac{q-1}{2} \nmid |S_{\lambda,\FF_q}|$.

If $\lambda=(\lambda_1,\lambda_2,\ldots,\lambda_k)$ and no two of the $\lambda_i$ are equal, then
\[|S_{\lambda,\FF_q}|=\displaystyle\prod\limits_{i=1}^k N_q(\lambda_i),
\]
where $N_q$ is defined after the statement of Theorem \ref{convodd}. 

Since $q-1$ and $m(m-1)$ are even and $q-1$ is squarefree, 
that $\gcd(q-1,m(m-1)) \neq 2$ is equivalent to the following statement: there exists an odd prime $\ell$ such that $\ell \mid (q-1)$ and either 
$\ell \mid m$ or $\ell \mid (m-1)$. For such an odd prime $\ell$, 
let $2^{a_1}+2^{a_2}+\cdots+2^{a_k}$ be the base-$2$ expansion of $m/\ell$ if $\ell \mid m$ or of $(m-1)/\ell$ if $\ell \mid (m-1)$. 
The exponents $a_i$ are distinct. 

We will first deal with the case that 
$\ell \mid m$. Consider the partition $\lambda=(2^{a_1}\ell,2^{a_2}\ell,\ldots,2^{a_k}\ell)$ of $m$. Since the $a_i$ are distinct, we have
$$
|S_{\lambda,\FF_q}|=\displaystyle\prod\limits_{i=1}^k N_q(2^{a_i}\ell).
$$

By Lemma \ref{vlemma}, $v_\ell(N_q(2^{a_i}\ell))=v_\ell(q-1)-1$ for all $i$. Since $q-1$ is squarefree we have $v_\ell(q-1) = 1$, so 
$\ell \nmid N_q(2^{a_i}\ell)$ for all $i$. Therefore $\ell \nmid |S_{\lambda,\FF_q}|$, so $\frac{q-1}{2} \nmid |S_{\lambda,\FF_q}|$.

In the case that $\ell \mid (m-1)$, 
use the partition $\lambda=(2^{a_1}\ell,2^{a_2}\ell,\ldots,2^{a_k}\ell,1)$ of $m$. Since $N_q(1) = q$ is not divisible by $\ell$, 
we get $\frac{q-1}{2} \nmid |S_{\lambda,\FF_q}|$ in the same way. 
\end{proof}

Here is a similar result when $q$ is even.

\begin{thm}\label{conveven}
Suppose $q$ is an even prime power and $q-1$ is squarefree. If $m \geq 2$ and for every partition $\lambda$ of $m$ the discriminants of the polynomials in $S_{\lambda,\FF_q}$ are equally distributed among the elements of $\FF_q^{\times}$, then $\gcd(q-1,m(m-1))=1$.
\end{thm}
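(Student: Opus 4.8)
The plan is to mimic the proof of Theorem \ref{convodd} almost verbatim, using the same contrapositive strategy but with the divisibility obstruction stated purely in terms of $\frac{q-1}{1} = q-1$ rather than $\frac{q-1}{2}$, since in the even case the relevant target set is all of $\FF_q^\times$ (which has $q-1$ elements) instead of the half-set of elements with a prescribed quadratic character. So I would prove: if $q$ is an even prime power, $q-1$ is squarefree, and $\gcd(q-1,m(m-1)) \neq 1$, then there is a partition $\lambda$ of $m$ such that $(q-1) \nmid |S_{\lambda,\FF_q}|$, hence the discriminants of the polynomials in $S_{\lambda,\FF_q}$ cannot be equally distributed among the $q-1$ elements of $\FF_q^\times$.

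First I would note that since $q$ is even, $q-1$ is odd, so every prime dividing $q-1$ is odd; thus $\gcd(q-1, m(m-1)) \neq 1$ is equivalent to the existence of a (necessarily odd) prime $\ell$ with $\ell \mid (q-1)$ and either $\ell \mid m$ or $\ell \mid (m-1)$. This is exactly where the even case is cleaner than the odd case: there is no automatic factor of $2$ to quotient out, and Lemma \ref{vlemma} is already stated for any prime power $q$ and any odd prime $\ell \mid (q-1)$, so it applies directly. Next, exactly as before, writing $2^{a_1} + \cdots + 2^{a_k}$ for the base-$2$ expansion of $m/\ell$ (if $\ell \mid m$) or of $(m-1)/\ell$ (if $\ell \mid m-1$), with the $a_i$ distinct, I would take $\lambda = (2^{a_1}\ell, \ldots, 2^{a_k}\ell)$ in the first case and $\lambda = (2^{a_1}\ell, \ldots, 2^{a_k}\ell, 1)$ in the second. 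Since the parts are distinct, $|S_{\lambda,\FF_q}| = \prod_i N_q(2^{a_i}\ell)$ (times $N_q(1) = q$ in the second case), and by Lemma \ref{vlemma} together with $v_\ell(q-1) = 1$ (squarefreeness) we get $\ell \nmid N_q(2^{a_i}\ell)$ for each $i$, and $\ell \nmid q$ since $\ell$ is odd and $q$ is a power of $2$. Hence $\ell \nmid |S_{\lambda,\FF_q}|$, so $(q-1) \nmid |S_{\lambda,\FF_q}|$, which contradicts equal distribution: if the discriminants were equally distributed among all of $\FF_q^\times$, then $|S_{\lambda,\FF_q}|$ would be a multiple of $q-1$.

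There is essentially no new obstacle here — every ingredient (the base-$2$ expansion trick, Lemma \ref{vlemma}, the product formula for $|S_{\lambda,\FF_q}|$ on partitions with distinct parts, the squarefree hypothesis forcing $v_\ell = 1$) carries over unchanged from the proof of Theorem \ref{convodd}. The only points that genuinely need the even hypothesis are: (i) $q-1$ odd makes the reduction to "$\ell$ odd with $\ell \mid (q-1)$ and $\ell \mid m(m-1)$" immediate without splitting off a factor of $2$; and (ii) in the $\ell \mid (m-1)$ case one must check $\ell \nmid N_q(1) = q$, which holds because $\ell$ is odd and $q$ is even. So the "hard part" is really just organizing the write-up to reuse Lemma \ref{vlemma} and the argument from Theorem \ref{convodd} without redundancy — I would simply state the contrapositive, invoke Lemma \ref{vlemma}, choose the same $\lambda$, and conclude $(q-1) \nmid |S_{\lambda,\FF_q}|$, remarking that one must also use $\ell \nmid q$ in the second case.
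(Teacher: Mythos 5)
Your proposal is correct and follows essentially the same route as the paper: prove the contrapositive, use the oddness of $q-1$ to extract an odd prime $\ell \mid \gcd(q-1,m(m-1))$, reuse the partition construction and Lemma \ref{vlemma} from Theorem \ref{convodd}, and conclude $(q-1)\nmid |S_{\lambda,\FF_q}|$. The only addition is your explicit check that $\ell \nmid N_q(1)=q$, which the paper handles implicitly since $\ell \mid (q-1)$.
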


\begin{proof}
Like the previous result, we prove Theorem \ref{conveven} by proving its contrapositive. That is, if $q$ is a power of $2$, $q-1$ is squarefree, and 
$\gcd(q-1,m(m-1)) \neq 1$, then we will construct a partition $\lambda$ of $m$ such that the discriminants of the polynomials in $S_{\lambda,\FF_q}$ are not equally distributed among the elements of $\FF_q^\times$ because $q-1 \nmid |S_{\lambda,\FF_q}|$.

Since $q-1$ is odd, the condition $\gcd(q-1,m(m-1)) \neq 1$ is equivalent to the following statement: there exists an 
odd prime $\ell$ such that $\ell \mid (q-1)$ and either $\ell \mid m$ or $\ell \mid (m-1)$.  Using such an odd prime $\ell$, 
in exactly the same way as in the proof of Theorem \ref{convodd} build a partition $\lambda$ of $m$ and show 
$|S_{\lambda,\FF_q}|$ is not divisible by $\ell$, so $|S_{\lambda,\FF_q}|$ is not divisible by $q-1$.
\end{proof}

We do not think $q-1$ being squarefree is truly necessary for such converses, but it is only used to make the proofs above work.
It would be good to eliminate this hypothesis and establish a direct converse to Theorems \ref{thmodd} and \ref{thmeven}.

\section{Surjectivity of the Discriminant}\label{surj}

Theorem \ref{thmswan} gives a constraint on discriminants of monic polynomials in $\FF_q[x]$ when we fix 
the factorization type. When we don't fix the factorization type we do not think 
there is a constraint on discriminants anymore: 
for each $m \ge 2$ we believe 
discriminants of monic polynomials of degree $m$ in $\FF_q[x]$ should range over all of $\FF_q$.  
This is trivial when $m = 2$. 
We will prove it for $m$ greater than or equal to the characteristic of $\FF_q$.

\begin{thm}
\label{thmsur}
Let $p$ be a prime and let $q$ be a power of $p$. For $m \ge p$ and $d \in \FF_q$, there is a monic polynomial $f \in \FF_q[x]$ of degree $m$ such that $\disc(f)=d$.
\end{thm}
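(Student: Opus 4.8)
The plan is to reduce the surjectivity statement to an explicit construction: given $d \in \FF_q$ and $m \ge p$, produce a monic $f$ of degree $m$ with $\disc(f) = d$ by hand. The first reduction is to recall that $\disc$ scales nicely under the change of variables $\gamma_c$ from Lemma \ref{discc}: $\disc(\gamma_c(f)) = c^{m(m-1)}\disc(f)$. Unfortunately this only lets us multiply a given discriminant by $m(m-1)$th powers, which may not be enough when $\gcd(q-1,m(m-1)) > 1$, so I would not rely on scaling alone. Instead, the natural move is to find a one-parameter family of monic degree-$m$ polynomials whose discriminant, as a function of the parameter, is a nonconstant polynomial map $\FF_q \to \FF_q$ that is surjective — or at least to patch together a few families so that every value of $\FF_q$ is hit (remembering that $\disc = 0$ is already handled by Corollary \ref{lemb}).

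The key idea, and the reason the hypothesis $m \ge p$ enters, is to use polynomials of the form $f(x) = x^m + a x + b$ or, even simpler, to exploit that in characteristic $p$ with $m \ge p$ we can take $f(x) = g(x)\cdot h(x)$ or $f(x) = x^{m-p}\cdot(x^p - x - t) $-type constructions where an Artin–Schreier factor contributes a controllable discriminant. More concretely, I would look at $f_t(x) = (x^p - x - t) \cdot x^{m-p}$ when $m > p$, or handle $m = p$ separately with $f_t(x) = x^p - x - t$: the discriminant of $x^p - x - t$ over $\FF_q$ is (up to an explicit nonzero constant depending only on $p$) independent of $t$ since its roots are $\{\alpha + i : i \in \FF_p\}$ for any root $\alpha$, so that particular family is useless for surjectivity. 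That tells me the right family must break this symmetry: take instead $f_{a,b}(x) = x^m + a x^{m-1} + \cdots$ and compute $\disc$ as a polynomial in the coefficients, then specialize all but one coefficient to get a nonconstant polynomial in the remaining one. The cleanest candidate is the trinomial $f_t(x) = x^m - x^{m-p+1} + t$ or $f_t(x) = x^m + x + t$ — there are classical closed-form discriminant formulas for trinomials $x^m + ax^k + b$, and I would pick $k$ so that the formula, viewed as a function of $t$ (sitting in the "$b$" slot), is a nonconstant polynomial map of $\FF_q$; combined with the $\gamma_c$-scaling to adjust by an $m(m-1)$th power if needed, and with the already-known surjectivity onto $0$, this should cover all of $\FF_q$.

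The main obstacle I expect is ensuring the chosen parametrized family really does surject onto $\FF_q$ (not just onto a proper subfield or a coset), and in particular controlling the degenerate cases: small $q$, the case $m = p$, and values of $m$ where the trinomial discriminant formula degenerates (e.g. when $p \mid m$ or $p \mid (m-k)$). I would organize the proof as: (1) dispose of $d = 0$ by Corollary \ref{lemb}; (2) write down the explicit family and its discriminant as a polynomial $P(t) \in \FF_q[t]$; (3) check $P$ is nonconstant — this is where the condition $m \ge p$ should be used to guarantee the relevant coefficient in the discriminant formula does not vanish in characteristic $p$; (4) conclude $P$ takes every value in $\FF_q$, either because $\deg P$ is coprime to something forcing surjectivity, or more robustly by noting any nonconstant polynomial map $\FF_q \to \FF_q$ need not be surjective — so in fact I would instead arrange for $P$ to be \emph{linear} in $t$ (degree exactly $1$ as a polynomial in $t$), which is automatically a bijection $\FF_q \to \FF_q$. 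Making $\disc(f_t)$ exactly linear in a free parameter $t$ is the crux: picking $f_t(x) = x^m + t\,x^{m-1} + (\text{fixed lower terms chosen so the } t^{\ge 2}\text{ parts of the discriminant cancel})$, or inserting $t$ only in the constant term of a carefully chosen trinomial, should achieve this, and verifying the cancellation is the one calculation worth doing carefully.
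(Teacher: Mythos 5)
Your overall strategy --- dispose of $d=0$ separately and then exhibit explicit one-parameter families $f_t$ of monic degree-$m$ polynomials whose discriminant, as a function of $t$, is a bijection of $\FF_q$ --- is exactly the shape of the paper's argument, and your instinct that the dependence on $t$ must be forced to be bijective (rather than merely a nonconstant polynomial map) is correct. But the proposal stops precisely where the proof lives: you never produce a family for which the computation works, and you explicitly defer ``the one calculation worth doing carefully.'' Moreover, the concrete candidates you float would fail as stated: for trinomials such as $x^m+x+t$ or $x^m-x^{m-p+1}+t$, the classical trinomial discriminant formula gives a constant plus a constant times a high power of $t$ (degree $m-1$ or similar), which is in general neither linear in $t$ nor surjective onto $\FF_q$; and the $\gamma_c$-scaling you invoke as a patch only moves a discriminant within its coset of $m(m-1)$th powers, a subgroup you yourself note can be much smaller than $\FF_q^\times$. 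So steps (2)--(4) of your outline have not been carried out for any family, and for the families you name they cannot be.

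The missing idea is to choose $f_t$ so that in characteristic $p$ the derivative $f_t'$ collapses and is independent of the parameter, and then evaluate $\disc(f_t)=\pm a_m^{m-2}\prod_i f_t'(\alpha_i)$ as a product of values of a simple polynomial at the roots, hence in terms of $f_t$ at a few explicit points. The paper does this in four cases: if $p\nmid m$, take $f_a(x)=x^m-x^{m-p}+ax^p+1$, so $f_a'(x)=mx^{m-p-1}(x-1)^p$ (the $ax^p$ term differentiates to zero) and $\disc(f_a)=\pm m^m(a+1)^p$; if $p$ is odd and $p\mid m$, take $f_a(x)=x^m+x^2+a$, so $f_a'(x)=2x$ and $\disc(f_a)=\pm(-2)^m a$, literally linear in $a$ as you hoped; if $p=2$ one uses $x^m+x^3+a$ (for even $m\ge 4$) and $x^2+ax+1$ (for $m=2$), each with discriminant $a^2$. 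Note that insisting on strict linearity in $t$ is unnecessarily restrictive: maps like $a\mapsto c(a+1)^p$ and $a\mapsto a^2$ in characteristic $2$ are bijections of $\FF_q$ because the Frobenius is, and two of the four cases rely on exactly that. Finally, the hypothesis $m\ge p$ enters mainly to make these families legitimate (the monomials of degree $m-p$ and $p$ must have degree between $0$ and $m-1$), rather than to prevent a coefficient in a general discriminant formula from vanishing.
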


\begin{proof}
Our proof will be by explicit construction of suitable polynomials and has four cases.  

If $f(x) = a_m\prod_{i=1}^m (x- \alpha_i)$ then a standard formula for the discriminant of $f(x)$, obtained from rearranging terms in (\ref{disc}), is 
\begin{equation}\label{disc2}
\disc(f)=(-1)^{m(m-1)}a_m^{m-2}\displaystyle\prod\limits_{i=1}^m f'(\alpha_i).
\end{equation}

\underline{Case 1}: If $p\nmid m$, let $f_a(x)=x^m-x^{m-p}+ax^p+1$ with $a \in \FF_q$. 
We have 
$$
f_a'(x)=mx^{m-p-1}(x^p-1)=mx^{m-p-1}(x-1)^p.
$$
Factoring $f_a(x)$ as $\prod_{i=1}^m (x- \alpha_i)$, so $\prod_{i=1}^m \alpha_i = (-1)^m$, 
by \eqref{disc2} we have
$$
\disc(f_a) =(-1)^{m(m-1)/2}m^m(a+1)^p.
$$
The factor $m^m$ is nonzero in $\FF_q$ since $p\nmid m$ and 
there are no restrictions on $a$. Since the $p$th power map on $\FF_q$ is surjective, letting $a$ run  over all of $\FF_q$ 
makes $\disc(f_a)$ run over all of $\FF_q$. 

\underline{Case 2}: If $p \not= 2$ and $p\mid m$, let $f_a(x)=x^m+ x^2 +a$ with $a \in \FF_q$. 
Then $\disc(f_a) =  (-1)^{m(m-1)/2}(-2)^ma$. 
Since $-2 \not= 0$ in $\FF_q$, $\disc(f_a)$ runs over $\FF_q$ as $a$ does.  

 \underline{Case 3}: If $p=2$ and $p\mid m$ with $m \ge  4$, 
 let $f_a(x)=x^m+x^3 +a$ with $a \in \FF_q$. 
Then $\disc(f_a)  = a^2$. 
Squaring on $\FF_q$ is surjective since $p = 2$, so $\disc(f_a)$ runs over $\FF_q$ as $a$ does. 

\underline{Case 4}:  If $p=2$ and $m = 2$, let $f_a(x) = x^2 + ax + 1$ with $a \in \FF_q$. 
Then $\disc(f_a) = a^2$, which runs over $\FF_q$ as $a$ does.
\end{proof}

\textit{Acknowledgments}. This research was funded by the Clay Mathematics Institute and the PROMYS Foundation. We would like to thank 
Keith Conrad for proposing the topic and providing invaluable mentorship throughout the course of the research, 
Zhaorong Jin for guidance through the early stages of our work, and 
Glenn Stevens and the PROMYS program for providing the facilities where the research was performed.

\end{document}